\definecolor{gold}{rgb}{0.85,.66,0}
\definecolor{cherry}{rgb}{0.9,.1,.2}
\definecolor{burgundy}{rgb}{0.8,.2,.2}
\definecolor{orangered}{rgb}{0.85,.3,0}
\definecolor{orange}{rgb}{0.85,.4,0}
\definecolor{olive}{rgb}{.45,.4,0}
\definecolor{lime}{rgb}{.6,.9,0}
\definecolor{green}{rgb}{.2,.7,0}
\definecolor{grey}{rgb}{.4,.4,.2}
\definecolor{brown}{rgb}{.4,.3,.1}
\newtheorem{theorem}{Theorem}[section]
\newtheorem{corollary}[theorem]{Corollary}
\newtheorem{lemma}[theorem]{Lemma}
\newtheorem{proposition}[theorem]{Proposition}
\newtheorem{definition}[theorem]{Definition}
\newcommand{\EM}{\sf{m}}
\title{A simple bijection between standard $3 \times n$ tableaux and irreducible webs for $\mathfrak{sl}_3$}
\author{Julianna Tymoczko}
 \address{Mathematics Department, University of Iowa, 14 MacLean
 Hall, Iowa City, Iowa 52242-1419, USA.}
 \email{julianna-tymoczko@uiowa.edu}
 \subjclass[2000]{Primary: 05E10, 05C10}
 \keywords{spider, representations of Lie algebras, Young tableau, jeu de taquin, promotion}
 \thanks{The author is partially supported by 
 NSF grant DMS-0801554 and a Sloan Research Fellowship.}
\begin{document}

\maketitle

\begin{abstract}
Combinatorial spiders are a model for the invariant space of the tensor product of representations.  The basic objects, {\em webs}, are certain directed planar graphs with boundary; algebraic operations on representations correspond to graph-theoretic operations on webs.  Kuperberg developed spiders for rank $2$ Lie algebras and $\mathfrak{sl}_2$.  Building on a result of Kuperberg's, Khovanov-Kuperberg found a recursive algorithm giving a bijection between standard Young tableaux of shape $3 \times n$ and irreducible webs for $\mathfrak{sl}_3$ whose boundary vertices are all sources.

In this paper, we give a simple and explicit map from standard Young tableaux of shape $3 \times n$ to  irreducible webs for $\mathfrak{sl}_3$ whose boundary vertices are all sources, and show that it is the same as Khovanov-Kuperberg's map.  Our construction generalizes to some webs with both sources and sinks on the boundary.  Moreover, it allows us to extend the correspondence between webs and tableaux in two ways.  First, we provide a short, geometric proof of Petersen-Pylyavskyy-Rhoades's recent result that rotation of webs corresponds to jeu-de-taquin promotion on $3 \times n$ tableaux.  Second, we define another natural operation on tableaux called a {\em shuffle}, and show that it corresponds to the join of two webs.  Our main tool is an intermediary object between tableaux and webs that we call an $\EM$-diagram.  The construction of $\EM$-diagrams, like many of our results, applies to shapes of tableaux other than $3 \times n$.
\end{abstract}

\section{Introduction}

Spiders are categories that describe representations of Lie algebras, particularly the invariant space of a tensor product of irreducible representations.  Kuperberg introduced a combinatorial description of spiders for all rank $2$ Lie algebras, as well as for $\mathfrak{sl}_2$, in which representations correspond to combinatorial graphs (called {\em webs}), and algebraic operations (like permutation of the tensor factors) correspond to combinatorial operations on the graphs \cite{Kup96}.  A web for the $\mathfrak{sl}_3$-spider is a planar directed graph embedded in a disk so that (1) internal vertices are trivalent and boundary vertices have degree one, and (2) each vertex is either a source (all edges directed out of the vertex) or a sink (all edges directed in).  (We use the streamlined presentation of Petersen-Pylyavskyy-Rhoades \cite{PPR09}.)  This construction seems like it could be generalized to other Lie algebras, yet combinatorial spiders are only known in the cases Kuperberg originally identified.  Researchers have recently and independently made suggestive inroads into this important open problem, including Kim \cite{Kim03} and Morrison \cite{Mor07}, and Jeong-Kim \cite{JK}.  

Young tableaux are a classical construction ubiquitous in the representation theory and geometry associated to the symmetric group $S_N$ and the Lie algebra $\mathfrak{sl}_N$ \cite[Part II]{Ful97}.  The Young diagram corresponding to the partition $\lambda_1 \leq \lambda_2 \leq \cdots$ of $N$ is a left-justified array with $\lambda_1$ boxes in the top row, $\lambda_2$ boxes in the second row, and so on.  A standard Young tableau corresponding to the partition $\lambda_1 \leq \lambda_2 \leq \cdots$ of $N$ is a filling of the Young diagram by the numbers $1,2,\ldots, N$ without repetition so that numbers increase left-to-right along rows and bottom-to-top along columns.  

Our paper deepens the connections between Young tableaux and spiders, placing these newer constructions in the context of classical work.   These connections have some precedent. Fung constructed a natural bijection between standard Young tableaux of shape $(n, n)$ and irreducible webs for $\mathfrak{sl}_2$ using the geometry of an object called the $(n,n)$ Springer variety \cite{Fun03}.  Khovanov-Kuperberg constructed a bijection between Young tableaux of shape $(n, n, n)$ and irreducible webs for $\mathfrak{sl}_3$ for which each boundary vertex is a source \cite{KK99}.  (Choosing whether a boundary vertex is a source or a sink is equivalent to choosing whether the corresponding tensor factor of the representation is the fundamental representation for $\mathfrak{sl}_3$ or its dual.)  However, Khovanov-Kuperberg's proof uses a complicated set of {\em growth rules} which, when recursively applied, eventually generate all irreducible webs for $\mathfrak{sl}_3$. 

This paper gives a simple and direct map from standard Young tableaux of shape $(n,n,n)$ to irreducible webs for $\mathfrak{sl}_3$ whose boundary vertices are all sources.  We give a quick example and colloquial description here; the reader interested in details can read Sections \ref{section: m definition}, \ref{section: resolved m definition}, and \ref{subsection: different depths} immediately, together with Sections \ref{section: m-diagram examples}, \ref{section: web examples}, and \ref{section: depth examples} for examples.

Our map uses an intermediate object called an $\EM$-diagram.  The $\EM$-diagram for a standard tableau with $N$ boxes consists of a boundary line with the numbers $1,2,\ldots,N$, together with a collection of arcs drawn above it.  To draw the arcs, read from the bottom to the top row, and then from left to right along each row, connecting the number $i$ with an arc to the (1) largest number (2) on the row below $i$ that (3) is not yet connected to a number on $i$'s row.  (Section \ref{section: Young to m} has more.) 
\vspace{-2em}
\[\begin{array}{|c|c|}
\cline{1-2}
5 & 6\\
\cline{1-2}
3 & 4\\
\cline{1-2}
1 & 2\\
\cline{1-2}
\end{array}
\hspace{0.2in} \rightarrow \hspace{0.2in}
\begin{picture}(60, 10)(0, 8)
\put(0,5){\line(1,0){60}}
\put(5,4){\line(0,1){2}}
\put(15,4){\line(0,1){2}}
\put(25,4){\line(0,1){2}}
\put(35,4){\line(0,1){2}}
\put(45,4){\line(0,1){2}}
\put(55,4){\line(0,1){2}}
\put(5,2){\makebox(0,0){$1$}}
\put(15,2){\makebox(0,0){$2$}}
\put(25,2){\makebox(0,0){$3$}}
\put(35,2){\makebox(0,0){$4$}}
\put(45,2){\makebox(0,0){$5$}}
\put(55,2){\makebox(0,0){$6$}}
\put(20,5){\oval(10,10)[t]}
\put(40,5){\oval(10,10)[t]}
\put(40,5){\oval(30,18)[t]}
\put(20,5){\oval(30,15)[t]}
\end{picture}
\hspace{0.2in} \rightarrow \hspace{0.2in}
\begin{picture}(60, 15)(0, 8)
\put(0,5){\line(1,0){60}}
\put(5,4){\line(0,1){2}}
\put(15,4){\line(0,1){2}}
\put(25,4){\line(0,1){2}}
\put(35,4){\line(0,1){2}}
\put(45,4){\line(0,1){2}}
\put(55,4){\line(0,1){2}}
\put(5,2){\makebox(0,0){$1$}}
\put(15,2){\makebox(0,0){$2$}}
\put(25,2){\makebox(0,0){$3$}}
\put(35,2){\makebox(0,0){$4$}}
\put(45,2){\makebox(0,0){$5$}}
\put(55,2){\makebox(0,0){$6$}}
\put(20,5){\oval(10,10)[t]}
\put(40,5){\oval(10,10)[t]}
\put(30,10){\oval(20,10)[t]}
\put(30,5){\oval(50,26)[t]}
\put(30,15){\line(0,1){3}}
\put(25,18){\vector(1,0){2}}
\put(35,18){\vector(-1,0){2}}
\put(30,15){\vector(0,1){2}}
\put(27,15){\vector(-1,0){2}}
\put(33,15){\vector(1,0){2}}
\put(15,7){\vector(1,1){2}}
\put(25,7){\vector(-1,1){2}}
\put(35,7){\vector(1,1){2}}
\put(45,7){\vector(-1,1){2}}
\end{picture}
\]

\noindent To construct a web from this $\EM$-diagram, do three things.  First, each boundary vertex on two arcs looks locally like a $V$; replace this neighborhood with a small $Y$.  Second, each arc now has exactly one endpoint on the boundary; direct each arc away from the boundary vertex, continuing with the same direction across any intersections.  Third, anywhere two arcs cross is a four-valent vertex with two edges directed in and two directed out; replace this vertex with two vertices joined by a directed edge.  We will confirm that there is a unique way to do this so that one of the new vertices is a source and the other is a sink.  (Section \ref{section: m to web} has more detail and precise definitions.)

A series of lemmas in Section \ref{section: reduced webs} prove that the planar graphs obtained in this way from standard Young tableaux of shape $(n \leq k \leq k)$ are in fact irreducible webs for $\mathfrak{sl}_3$.  (This partially generalizes Khovanov-Kuperberg's work.)  Moreover, we prove in Theorem \ref{theorem: bijection} that this map is a bijection between standard Young tableaux of shape $(n,n,n)$ and irreducible webs for $\mathfrak{sl}_3$ whose boundary vertices are all sources. In fact, we will show that our bijection coincides with Khovanov-Kuperberg's bijection; we also show that it can be extended to some irreducible webs for $\mathfrak{sl}_3$ with both sources and sinks as boundary vertices.  

Theorem \ref{theorem: bijection} actually proves that Khovanov-Kuperberg's map from webs to tableaux inverts our map from tableaux to webs. Our proof uses two notions of depth: {\em circle depth} in an $\EM$-diagram, which is the number of arcs above each face of the $\EM$-diagram; and Khovanov-Kuperberg's {\em path depth} in a web, which is the minimum number of edges crossed by paths from a given face to the unbounded face of the web. (Path depth is distance in the dual graph to a planar graph.)  
\vspace{-1em}
\[
\begin{picture}(60, 10)(0, 8)
\put(0,5){\line(1,0){60}}
\put(5,4){\line(0,1){2}}
\put(15,4){\line(0,1){2}}
\put(25,4){\line(0,1){2}}
\put(35,4){\line(0,1){2}}
\put(45,4){\line(0,1){2}}
\put(55,4){\line(0,1){2}}
\put(5,2){\makebox(0,0){$1$}}
\put(15,2){\makebox(0,0){$2$}}
\put(25,2){\makebox(0,0){$3$}}
\put(35,2){\makebox(0,0){$4$}}
\put(45,2){\makebox(0,0){$5$}}
\put(55,2){\makebox(0,0){$6$}}
\put(20,5){\oval(10,10)[t]}
\put(40,5){\oval(10,10)[t]}
\put(40,5){\oval(30,18)[t]}
\put(20,5){\oval(30,15)[t]}
\put(19,6){\small 2}
\put(29,7){\small 2}
\put(39,6){\small 2}
\put(9,8){\small 1}
\put(49,8){\small 1}
\put(2,9){\small 0}
\put(56,9){\small 0}
\end{picture}
\hspace{0.2in} \textup{or} \hspace{0.2in}
\begin{picture}(60, 15)(0, 8)
\put(0,5){\line(1,0){60}}
\put(5,4){\line(0,1){2}}
\put(15,4){\line(0,1){2}}
\put(25,4){\line(0,1){2}}
\put(35,4){\line(0,1){2}}
\put(45,4){\line(0,1){2}}
\put(55,4){\line(0,1){2}}
\put(5,2){\makebox(0,0){$1$}}
\put(15,2){\makebox(0,0){$2$}}
\put(25,2){\makebox(0,0){$3$}}
\put(35,2){\makebox(0,0){$4$}}
\put(45,2){\makebox(0,0){$5$}}
\put(55,2){\makebox(0,0){$6$}}
\put(20,5){\oval(10,10)[t]}
\put(40,5){\oval(10,10)[t]}
\put(30,10){\oval(20,10)[t]}
\put(30,5){\oval(50,26)[t]}
\put(30,15){\line(0,1){3}}
\put(25,18){\vector(1,0){2}}
\put(35,18){\vector(-1,0){2}}
\put(30,15){\vector(0,1){2}}
\put(27,15){\vector(-1,0){2}}
\put(33,15){\vector(1,0){2}}
\put(15,7){\vector(1,1){2}}
\put(25,7){\vector(-1,1){2}}
\put(35,7){\vector(1,1){2}}
\put(45,7){\vector(-1,1){2}}
\put(19,6){\small 2}
\put(29,9){\small 2}
\put(39,6){\small 2}
\put(9,11){\small 1}
\put(49,11){\small 1}
\put(1,12){\small 0}
\put(57,12){\small 0}
\end{picture}
\hspace{0.2in} \rightarrow \hspace{0.2in}
\begin{array}{|c|c|}
\cline{1-2}
5 & 6\\
\cline{1-2}
3 & 4\\
\cline{1-2}
1 & 2\\
\cline{1-2}
\end{array}
\]
In Lemma \ref{lemma: circle depth is path depth}, we prove that these two depths coincide in an appropriate sense.  Khovanov-Kuperberg's map from webs to tableaux puts $i$ on the bottom row of the tableau if the depth increases at the boundary vertex $i$, the middle row if depth stays the same at $i$, and the top row if depth decreases at $i$.  The reader can see in our example that this recovers the original tableau.

We provide two applications of our construction. Proposition \ref{proposition: promotion and rotation} radically simplifies Petersen-Pylyavskyy-Rhoades's recent result for $(n,n,n)$ tableaux that an operation on tableaux called {\em jeu-de-taquin promotion} corresponds to an operation on webs called {\em rotation} \cite{PPR09}.  Proposition \ref{proposition: shuffle and join} gives the new result that an operation called the {\em join} of two webs corresponds to an operation on Young tableaux called a {\em shuffle}.  (The proposition applies to more general shapes than three-row Young tableaux, though the corresponding planar graphs are not webs in that case.)

Though not directly relevant to this work, our construction is motivated by the geometry of the $(n,n,n)$ Springer variety, and generalizes earlier work of Fung's \cite{Fun03} and of the author's with H.~Russell \cite{RT}.  We ask two questions for future research: 
\begin{enumerate}
\item Does this method give webs for $\mathfrak{sl}_k$ from standard Young tableaux of shape $k \times n$?
\item What does this correspondance (particularly depth) imply about the geometry of the irreducible components of the $(n,n,n)$ Springer variety? 
\end{enumerate}

\section{From Young tableaux to $\EM$-diagrams}\label{section: Young to m}
Our path from Young tableaux to webs goes through an object which we call an {\em $\EM$-diagram}.  The $\EM$-diagrams are read directly from the Young tableaux; they are almost webs, except that they have 4-valent vertices that we will turn into trivalent vertices.  In this section, we describe how to construct $\EM$-diagrams, give several examples, and then prove fundamental properties of $\EM$-diagrams.
\subsection{Defining $\EM$-diagrams}  \label{section: m definition}

Let $\lambda$ be a Young diagram with $N$ boxes and let $T$ be a standard tableau of shape $\lambda$.
Construct the $\EM$-diagram corresponding to $T$ as follows: 
\begin{enumerate}
\item Draw a line with the numbers $1,2,\ldots, N$ in increasing order.  This is the boundary line at the base of the $\EM$-diagram; all arcs are drawn above this line.
\item For each $i=1,2,\ldots,N$ not on the bottom row, find $j<i$ such that $j$ is
\begin{itemize}
\item the largest number
\item that lies on the row immediately below the row with $i$ and
\item that is not already on an arc with another number from the same row as $i$.
\end{itemize}
In other words, 
\[j = \max \{k \textup{ on the row below $i$}: k < i, \textup{ $k$ not on an arc to a number on the same row as $i$}\}.\] 
Then 
\begin{itemize}
\item join $i$ to $j$ with a semicircular arc.
\end{itemize}
\end{enumerate}

For instance, if $i$ is on the bottom row of $T$ then there is no such $j$, and no arc is created at the $i^{th}$ iteration of Step (2).  For readers who prefer visual descriptions, the number $j$ is the first number to the left of $i$ on the boundary line such that $j$ is on the row below $i$ and $j$ is not joined by an arc to any number on the same row as $i$.  

\begin{definition}
Arcs between the $k^{th}$ row from the bottom and $(k+1)^{st}$ row from the bottom are called {\em $k^{th}$ arcs}.  A {\em boundary vertex} is a point lying on both an arc and the boundary line.
\end{definition}

We use three-row tableaux extensively.  The following terminology is useful in this special case.

\begin{definition}
Suppose $i < j<k$ are boundary vertices.  If the only arc incident to either $i$ or $j$ is $(i,j)$, then $(i,j)$ is called an {\em isolated arc}.  If the only arcs incident to any of $i$, $j$, or $k$ are $(i,j)$ and $(j,k)$ then $(i,j,k)$ is called an {\em $\EM$}.
\end{definition}

\subsection{Examples} \label{section: m-diagram examples}

This paper focuses on Young diagrams with three rows, usually rectangular.  An $\EM$-diagram for a $3 \times n$ Young tableau contains $n$ figures, each of which resembles an $\EM$ (and is called an $\EM$).  The $\EM$s can be nested, unnested, or cross in various ways.  In this section we give the $\EM$-diagrams for each of the five possible $3 \times 2$ tableaux, with second arcs drawn in boldface. (Corollary \ref{corollary:two ms relative} will prove that {\em any} two $\EM$s in {\em any} $\EM$-diagram for a three-row tableau are in one of these relative positions.)

The simplest kind of web corresponds to the tableau filled with $\{1,2,\ldots\}$ in numerical order, bottom to top and left to right.  The arcs in these webs neither cross nor nest each other.
\[
\begin{array}{|c|c|}
\cline{1-2}
3 & 6\\
\cline{1-2}
2 & 5\\
\cline{1-2}
1 & 4\\
\cline{1-2}
\end{array} 
\hspace{1in} 
\begin{picture}(60, 10)(0, 5)
\put(0,5){\line(1,0){60}}
\put(5,4){\line(0,1){2}}
\put(15,4){\line(0,1){2}}
\put(25,4){\line(0,1){2}}
\put(35,4){\line(0,1){2}}
\put(45,4){\line(0,1){2}}
\put(55,4){\line(0,1){2}}
\put(5,2){\makebox(0,0){$1$}}
\put(15,2){\makebox(0,0){$2$}}
\put(25,2){\makebox(0,0){$3$}}
\put(35,2){\makebox(0,0){$4$}}
\put(45,2){\makebox(0,0){$5$}}
\put(55,2){\makebox(0,0){$6$}}

\thinlines
\put(10,5){\oval(10,10)[t]}
\put(40,5){\oval(10,10)[t]}

\thicklines
\put(20,5){\oval(10,10)[t]}
\put(50,5){\oval(10,10)[t]}
\end{picture}
\]
The next two examples demonstrate nesting: no arcs cross, but one $\EM$ sits inside another.  This can happen in more than one way: either a first or second arc may enclose an $\EM$.
\[\begin{array}{|c|c|}
\cline{1-2}
4 & 6\\
\cline{1-2}
3 & 5\\
\cline{1-2}
1 & 2\\
\cline{1-2}
\end{array}
\hspace{1in} 
\begin{picture}(60, 10)(0, 5)
\put(0,5){\line(1,0){60}}
\put(5,4){\line(0,1){2}}
\put(15,4){\line(0,1){2}}
\put(25,4){\line(0,1){2}}
\put(35,4){\line(0,1){2}}
\put(45,4){\line(0,1){2}}
\put(55,4){\line(0,1){2}}
\put(5,2){\makebox(0,0){$1$}}
\put(15,2){\makebox(0,0){$2$}}
\put(25,2){\makebox(0,0){$3$}}
\put(35,2){\makebox(0,0){$4$}}
\put(45,2){\makebox(0,0){$5$}}
\put(55,2){\makebox(0,0){$6$}}

\thinlines
\put(20,5){\oval(10,10)[t]}
\put(25,5){\oval(40,15)[t]}

\thicklines
\put(30,5){\oval(10,10)[t]}
\put(50,5){\oval(10,10)[t]}
\end{picture}
\]
\[\begin{array}{|c|c|}
\cline{1-2}
5 & 6\\
\cline{1-2}
2 & 4\\
\cline{1-2}
1 & 3\\
\cline{1-2}
\end{array}
\hspace{1in} 
\begin{picture}(60, 10)(0, 5)
\put(0,5){\line(1,0){60}}
\put(5,4){\line(0,1){2}}
\put(15,4){\line(0,1){2}}
\put(25,4){\line(0,1){2}}
\put(35,4){\line(0,1){2}}
\put(45,4){\line(0,1){2}}
\put(55,4){\line(0,1){2}}
\put(5,2){\makebox(0,0){$1$}}
\put(15,2){\makebox(0,0){$2$}}
\put(25,2){\makebox(0,0){$3$}}
\put(35,2){\makebox(0,0){$4$}}
\put(45,2){\makebox(0,0){$5$}}
\put(55,2){\makebox(0,0){$6$}}

\thinlines
\put(30,5){\oval(10,10)[t]}
\put(10,5){\oval(10,10)[t]}

\thicklines
\put(40,5){\oval(10,10)[t]}
\put(35,5){\oval(40,15)[t]}
\end{picture}
\]
Finally, two arcs may cross, as in the last two examples.  This can also happen in two different ways: either the second arc crosses from above the first arc, or from below.
\[\begin{array}{|c|c|}
\cline{1-2}
5 & 6\\
\cline{1-2}
3 & 4\\
\cline{1-2}
1 & 2\\
\cline{1-2}
\end{array}
\hspace{1in} 
\begin{picture}(60, 10)(0, 8)
\put(0,5){\line(1,0){60}}
\put(5,4){\line(0,1){2}}
\put(15,4){\line(0,1){2}}
\put(25,4){\line(0,1){2}}
\put(35,4){\line(0,1){2}}
\put(45,4){\line(0,1){2}}
\put(55,4){\line(0,1){2}}
\put(5,2){\makebox(0,0){$1$}}
\put(15,2){\makebox(0,0){$2$}}
\put(25,2){\makebox(0,0){$3$}}
\put(35,2){\makebox(0,0){$4$}}
\put(45,2){\makebox(0,0){$5$}}
\put(55,2){\makebox(0,0){$6$}}

\thinlines
\put(20,5){\oval(30,15)[t]}
\put(20,5){\oval(10,10)[t]}

\thicklines
\put(40,5){\oval(10,10)[t]}
\put(40,5){\oval(30,18)[t]}
\end{picture}
\]
\[\begin{array}{|c|c|}
\cline{1-2}
4 & 6\\
\cline{1-2}
2 & 5\\
\cline{1-2}
1 & 3\\
\cline{1-2}
\end{array}
\hspace{1in} 
\begin{picture}(60, 10)(0, 8)
\put(0,5){\line(1,0){60}}
\put(5,4){\line(0,1){2}}
\put(15,4){\line(0,1){2}}
\put(25,4){\line(0,1){2}}
\put(35,4){\line(0,1){2}}
\put(45,4){\line(0,1){2}}
\put(55,4){\line(0,1){2}}
\put(5,2){\makebox(0,0){$1$}}
\put(15,2){\makebox(0,0){$2$}}
\put(25,2){\makebox(0,0){$3$}}
\put(35,2){\makebox(0,0){$4$}}
\put(45,2){\makebox(0,0){$5$}}
\put(55,2){\makebox(0,0){$6$}}

\thinlines
\put(10,5){\oval(10,10)[t]}
\put(35,5){\oval(20,15)[t]}

\thicklines
\put(50,5){\oval(10,10)[t]}
\put(25,5){\oval(20,17)[t]}
\end{picture}
\]

With larger tableaux, we will often see combinations of crossing and nesting within one $\EM$-diagram.  However, any two $\EM$s will be in the same relative position as one of these five examples. 

\subsection{Properties of $\EM$s}
We now prove several key properties about how $\EM$s can cross in an $\EM$-diagram.  First we confirm that $\EM$-diagrams are well-defined.

\begin{lemma}
The map from the standard tableau $T$ of shape $\lambda$ to an $\EM$-diagram is well-defined.
\end{lemma}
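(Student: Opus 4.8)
The map is defined by a deterministic procedure, so to show it is well-defined it suffices to check that the procedure never gets stuck and never has a choice: for every $i$ that is not on the bottom row, the set
\[S_i = \{k \textup{ on the row below } i : k < i, \ k \textup{ not on an arc to a number on the same row as } i\}\]
must be nonempty. Granting that, $j = \max S_i$ exists and is unique, being the maximum of a finite set of distinct positive integers, so each iteration of Step (2) draws exactly one arc and the output is fully determined. The plan is to prove that $S_i \neq \emptyset$ by induction on $i$, with induction hypothesis that the procedure has already succeeded (produced a single well-defined arc) for every $i' < i$ that is off the bottom row.

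The heart of the argument is a counting estimate. Suppose $i$ lies in the $r^{th}$ row from the bottom, so $r \geq 2$. Because $T$ is standard, the cells of $T$ whose entries are at most $i$ form a Young subdiagram of $\lambda$: if a cell has entry $\leq i$, then so do the cell directly below it (columns increase bottom-to-top) and the cell directly to its left (rows increase left-to-right). Let $a$ be the number of entries $\leq i$ in row $r$ and let $b$ be the number of entries $\leq i$ in row $r-1$. Since lower rows of a Young diagram are at least as long as the rows above them, the same inequality passes to this subdiagram, giving $b \geq a$. Moreover all $b$ of these entries in row $r-1$ are actually $< i$, since $i$ itself sits in row $r$; thus each of them satisfies the first two conditions defining $S_i$, and the only question is how many have already been used.

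To finish I would bound the number of ``blocked'' candidates, meaning numbers on row $r-1$ already joined by an arc to a number on row $r$. At the moment $i$ is processed the only entries on row $r$ that could carry such an arc are the $a-1$ entries on row $r$ smaller than $i$, since larger entries are not yet processed; by the induction hypothesis each of these received an arc, and the exclusion clause built into Step (2) forces distinct entries on row $r$ to be joined to distinct numbers on row $r-1$. Hence at most $a-1$ of the $b$ candidate numbers on row $r-1$ are blocked, so
\[|S_i| \geq b - (a-1) \geq a - (a-1) = 1,\]
and $S_i$ is nonempty, completing the induction. The only real obstacle is the bookkeeping: one must verify carefully that the entries $\leq i$ genuinely form a Young subdiagram (so that $b \geq a$ is available) and that the previously drawn arcs between rows $r$ and $r-1$ occupy distinct lower endpoints (so that only $a-1$ candidates are blocked). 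Both facts follow immediately from the standardness of $T$ and from the exclusion condition in Step (2), respectively, after which the inequality $b-(a-1)\geq 1$ yields the result.
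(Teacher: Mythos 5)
Your proof is correct and is essentially the paper's own argument: with $i$ the $a^{th}$ entry in its row, the $a$ leftmost entries of the row below are all smaller than $i$ (you phrase this via the Young subdiagram of entries $\leq i$, the paper via rows and columns increasing), while the $a-1$ earlier entries of $i$'s row block at most $a-1$ of them, leaving at least one available. The approaches coincide, so no further comparison is needed.
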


\begin{proof}
Suppose that $i$ is the $k^{th}$ box in its row.  Columns increase in a standard Young tableau, so $i$ is greater than the $k^{th}$ box in the row below $i$.  Rows increase, so $i$ is greater than all of the first $k$ boxes in the row below $i$.  By construction, each number in the row containing $i$ is joined to at most one number in the row below, so there is at least one number $j < i$ in the row below $i$ and to the left of $i$ which is not part of an arc.
\end{proof}

\begin{proposition}\label{proposition:basic noncrossing props}
The $\EM$-diagram of a standard Young tableau of shape $\lambda$ satisfies the following:
\begin{enumerate}
\item Two arcs intersect in at most one point.   In particular, locally the $\EM$-diagram is an X near each crossing; the arcs share no point other than the intersection point.
\item The set of $k^{th}$ arcs is pairwise noncrossing.
\end{enumerate}
\end{proposition}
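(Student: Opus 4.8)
The plan is to prove part (1) by elementary plane geometry and part (2) by analyzing the greedy rule of Step (2); part (1) then lets me translate the word ``crossing'' in part (2) into a purely combinatorial condition on endpoints. For part (1), I would realize each arc as the upper half of a circle whose diameter lies on the boundary line $\ell$, as prescribed by the construction. Distinct arcs join distinct pairs of boundary vertices, so their underlying circles are distinct and meet in at most two points. Both circles are symmetric under reflection across $\ell$, so their intersection points are permuted by this reflection; hence at most one such point lies strictly above $\ell$, and any intersection point on $\ell$ is fixed by the reflection and is forced to be the only one. Restricting to the upper semicircles, two arcs share at most one point. A tangency strictly above $\ell$ is impossible, since its mirror image would be a second intersection point below $\ell$, contradicting that the pair is already accounted for; so every interior intersection is transverse, giving the local X, while a shared endpoint lies on $\ell$ and produces a single common point (locally a V), not a crossing.

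For part (2), I would first observe that the $k^{th}$ arcs form a partial matching between row $k$ and row $k+1$: the clause ``not already on an arc to a number on the same row as $i$'' in Step (2) guarantees that each vertex on row $k$ is the lower endpoint of at most one $k^{th}$ arc and each vertex on row $k+1$ is the upper endpoint of at most one. Thus any two $k^{th}$ arcs have four distinct endpoints and share none, and by part (1) two such arcs cross if and only if their endpoints interleave along $\ell$. It therefore suffices to exclude interleaving.

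Suppose, for contradiction, that two $k^{th}$ arcs $(j_1,i_1)$ and $(j_2,i_2)$ interleave, where $j_1,j_2$ lie on row $k$ and $i_1,i_2$ on row $k+1$; normalizing $j_1<j_2$, interleaving means $j_1<j_2<i_1<i_2$. I would examine the iteration at which the construction processes $i_1$. Because $i_2>i_1$ and $j_2$ is matched to $i_2$ and to no other vertex of row $k+1$, the vertex $j_2$ carries no arc to $i_1$'s row at that moment; moreover $j_2$ lies on the row immediately below $i_1$ and satisfies $j_2<i_1$. Hence $j_2$ is an admissible choice for $i_1$ in the sense of Step (2), and since $j_2>j_1$ this contradicts the maximality that makes $i_1$ join $j_1$. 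So no interleaving occurs, and with part (1) the $k^{th}$ arcs are pairwise noncrossing.

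I expect the only delicate point to be the availability bookkeeping in the last step: one must be certain that $j_2$ is genuinely free when $i_1$ is processed. This rests on two facts already in hand, namely that the $k^{th}$ arcs form a matching (so $j_2$'s unique partner on row $k+1$ is $i_2$) and that vertices are processed in increasing numerical order (so the arc to $i_2>i_1$ has not yet been drawn). Once these are pinned down, the maximality built into Step (2) closes the argument, and the geometric input from part (1) is what lets the combinatorics of endpoint order stand in for actual intersection.
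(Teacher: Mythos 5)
Your proposal is correct and follows essentially the same route as the paper: the reflection-symmetry argument for semicircles in part (1), and for part (2) the observation that an interleaved pair of $k^{th}$ arcs would violate the maximality clause of Step (2), since the larger available lower endpoint is still unmatched when the earlier upper vertex is processed. Your write-up just makes explicit the matching property and the availability bookkeeping that the paper's proof leaves implicit.
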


\begin{proof}
Each arc is a semicircle whose center and endpoints are on the boundary line, and whose diameter is the distance between the endpoints.  

Two distinct circles intersect in at most two points.  If we treat the boundary line as the x-axis, then the two points of intersection have coordinates $(x_0, y_0)$ and $(x_0, -y_0)$.  At most one of these points lies above the boundary line, namely on the arc.  So any two arcs intersect in at most one point, and locally near the intersection, the $\EM$-diagram is an X.  

Suppose that $(i,j)$ and $(i',j')$ are the endpoints of two intersecting arcs, with $i<j$ and $i'<j'$.  Without loss of generality assume that $i<i'\leq j < j'$.  If $i'=j$ then $(i,j)$ is a $k^{th}$ arc and $(i',j')$ is a $(k+1)^{th}$ arc for some $k$.  If not, then $j$ and $j'$ are not both on the $(k+1)^{th}$ row, since by construction of $\EM$-diagrams $j$ must be joined to the largest possible $i'$ from the $k^{th}$ row.  This proves the second part of the claim.  
\end{proof}

These conditions seem similar to those that arise in recent work of Petersen-Pylyavskyy-Speyer \cite{PPS}.  The next corollary specializes to Young diagrams with three rows.

\begin{corollary}\label{corollary:two ms relative}
The $\EM$-diagram of a standard three-row Young tableau satisfies the following:
\begin{enumerate}
\item At most two arcs intersect at a given point.
\item Two arcs that cross consist of a first arc (of an $\EM$ or an isolated arc) and the second arc of a different $\EM$.
\item Any two $\EM$s cross at most once. 
\item Any two $\EM$s will be in one of the five relative positions described by the $\EM$-diagrams of $3 \times 2$ tableaux.
\end{enumerate}
\end{corollary}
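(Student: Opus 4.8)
The plan is to reduce the whole corollary to two structural facts about three-row $\EM$-diagrams, which I would record first. Fact one: every vertex has degree at most two. Indeed, by construction each $i$ not on the bottom row is joined to exactly one number below it, so $i$ is the upper endpoint of exactly one arc; and since only numbers on the row immediately above $i$ can select $i$ as their $j$, and $i$ is blocked once it has been selected, $i$ is the lower endpoint of at most one arc. Hence every vertex has degree at most two, and row-one and row-three vertices have degree at most one. Fact two: the arcs at a row-two vertex always assemble into an $\EM$. If $(j,k)$ is any second arc, its row-two endpoint $j$ is also the upper endpoint of a unique first arc $(i,j)$; since $i$ and $k$ then have degree one while $j$ has degree two, the only arcs touching $i,j,k$ are $(i,j)$ and $(j,k)$, so $(i,j,k)$ is an $\EM$. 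Likewise a first arc $(i,j)$ either extends upward to an $\EM$ or, if $j$ carries no second arc, is isolated.

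Part (1) then follows immediately. By Proposition \ref{proposition:basic noncrossing props}(1) every interior intersection is a transversal X, and by part (2) of that proposition arcs of the same type never meet in the interior; so three arcs through a common interior point would force two of the same type to cross, which is impossible, while at a boundary point the degree bound of Fact one forbids three arcs. For part (2) I would argue that a crossing cannot pair two first arcs or two second arcs (Proposition \ref{proposition:basic noncrossing props}(2)), so it pairs a first arc against a second arc; Facts one and two identify the first arc as belonging to an $\EM$ or isolated, and the second arc as the second arc of an $\EM$. These $\EM$s are distinct because the first and second arcs of a single $\EM$ share the vertex $j$ and meet only there on the boundary, hence never cross.

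For parts (3) and (4) I would place the boundary line on the $x$-axis and write the two $\EM$s as $M=(i,j,k)$ and $M'=(i',j',k')$. The six labels are distinct (a shared vertex would violate Fact one) and $j\neq j'$, so assume $j<j'$. The key observation for (3) is that the first arc $[i,j]$ of $M$ lies entirely to the left of the second arc $[j',k']$ of $M'$, since $i<j<j'<k'$; these never cross, so the only possible crossing is between the first arc $[i',j']$ of $M'$ and the second arc $[j,k]$ of $M$, and by Proposition \ref{proposition:basic noncrossing props}(1) that occurs at most once. For (4) I would use that the two first arcs are noncrossing and the two second arcs are noncrossing; with $j<j'$ this forces the first arcs into exactly one of $i'<i<j<j'$ (nested) or $i<j<i'<j'$ (disjoint), and the second arcs into $j<j'<k'<k$ (nested) or $j<k<j'<k'$ (disjoint). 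Intersecting these four cases while tracking whether $[i',j']$ and $[j,k]$ interleave should recover precisely the five linear orderings of $i,j,k,i',j',k'$ realized by the five $3\times2$ $\EM$-diagrams.

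The step I expect to be the main obstacle is the bookkeeping in (4): the combination in which both pairs of arcs are disjoint does not by itself determine the configuration, since it splits according to whether $k<i'$ or $i'<k$, yielding the side-by-side position in the first case and a crossing position in the second. I would handle this by carrying that extra comparison explicitly and checking that both resulting orders on $\{i,j,k,i',j',k'\}$ match those computed from the two crossing examples among the $3\times2$ tableaux, so that the four arc-position combinations, with this single sub-split, exhaust exactly the five pictures and admit nothing else.
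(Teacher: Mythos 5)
Your proposal is correct, and for parts (3) and (4) it takes a genuinely different route from the paper. Parts (1) and (2) are handled essentially as in the paper: both arguments reduce to the fact that a three-row diagram has only first and second arcs and that same-type arcs are noncrossing by Proposition \ref{proposition:basic noncrossing props}; your explicit ``Facts one and two'' (degree bounds and the assembly of every second arc into an $\EM$) are only implicit in the paper but are worth recording, since they justify why the arcs in a crossing really do belong to $\EM$s or isolated arcs and why two $\EM$s share no vertex. For part (3) the paper splits into two cases according to which of the crossing arcs is first and which is second, and derives endpoint inequalities in each case to rule out a second crossing; you instead normalize by $j<j'$ and observe that three of the four mixed-type pairs (first of $M$ vs.\ first of $M'$, second vs.\ second, and $[i,j]$ vs.\ $[j',k']$, the last because $i<j<j'<k'$ makes the supporting intervals disjoint) cannot meet, so only $[i',j']$ vs.\ $[j,k]$ can cross, and at most once by Proposition \ref{proposition:basic noncrossing props}(1). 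For part (4) the paper separates crossing from noncrossing configurations and simply lists the three noncrossing orders without much derivation; your enumeration by nested-versus-disjoint for the pair of first arcs and the pair of second arcs, with the single extra comparison $k \lessgtr i'$ in the doubly disjoint case, is more systematic and actually derives that exactly five orderings of $\{i,j,k,i',j',k'\}$ occur. What the paper's approach buys is brevity; what yours buys is a complete, checkable classification in which the five $3\times 2$ pictures fall out of four cases plus one sub-split rather than being matched to examples by inspection.
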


\begin{proof}

The only kinds of arcs in $\EM$-diagrams from three-row Young tableaux are first arcs (including isolated arcs) and second arcs. By Proposition \ref{proposition:basic noncrossing props}, if two arcs cross, then they must be a first arc (of an $\EM$ or an isolated arc) and the second arc of another $\EM$.  In particular, no more than two arcs intersect in the same point, since each arc is either a first arc or a second arc. This proves the first two parts of the theorem.

Suppose two arcs cross as sketched below.  By above, one arc is the first arc of an $\EM$, and the other arc is the second arc of another $\EM$.  
\[\begin{picture}(25,15)(0,0)
\put(0,5){\line(1,0){25}}
\put(10,5){\oval(10,10)[t]}
\put(15,5){\oval(10,10)[t]}
\put(4,1){\small $i$}
\put(9,1){\small $i'$}
\put(14,1){\small $j$}
\put(19,1){\small $j'$}
\end{picture}\]

Suppose $(i,j)$ is the second arc of the $\EM$ with vertices $(k,i,j)$ and $(i',j')$ is the first arc of the $\EM$ with vertices $(i',j',k')$.  Then $k<i<i'$ and $j<j'<k'$, so these $\EM$s cross only once.  

Suppose $(i,j)$ is the first arc of the $\EM$ with boundary points $(i,j,k)$ and $(i',j')$ is the second arc of the $\EM$ with boundary points $(k',i',j')$.  Since second arcs cannot intersect, the endpoints satisfy $k<j'$.  Similarly, since first arcs do not cross, the initial points satisfy $i<k'$.  Hence these $\EM$s cross only once.

In no case can two $\EM$s cross twice, proving the next part of the claim.  Moreover, the $\EM$-diagrams that we obtained are precisely the two crossing $\EM$-diagrams from $3 \times 2$ tableaux.

Suppose two $\EM$s $(i,j,k)$ and $(i',j',k')$ do not cross.  Assume without loss of generality that $i<i'$.  Then these $\EM$-diagrams must be in one of three relative positions:
\begin{itemize}
\item $k<i'$, 
\item $j<i'<k'<k$, or
\item $i<i'<k'<j$.
\end{itemize}
These are the three noncrossing $\EM$-diagrams from $3 \times 2$ tableaux.  This completes the proof.
\end{proof}

\section{From $\EM$-diagrams to webs}\label{section: m to web}
In this section, we restrict to the case of Young diagrams with three rows.  In this section, we describe how to transform an $\EM$-diagram for a three-row Young tableau into a web for $\mathfrak{sl}_3$.  We then prove that the webs we obtain are {\em irreducible}.  

Recall from the introduction that a web for $\mathfrak{sl}_3$ is a planar directed graph with boundary so that (1) internal vertices are trivalent and boundary vertices have degree one, and (2) each vertex is either a source or a sink.  Webs were originally defined to be embedded in a disk.  However, for convenience, we cut the disk to create a boundary line.

\subsection{Constructing webs from $\EM$-diagrams} \label{section: resolved m definition}
The $\EM$-diagrams obtained in the previous section are almost, but not quite, webs for $\mathfrak{sl}_3$.  There are three problems:
\begin{enumerate}
\item the boundary vertex $j$ on an $\EM$ given by $(i,j,k)$ has degree two; 
\item the edges are undirected; and
\item there are degree-four vertices where two arcs cross.
\end{enumerate}
Each problem is easily addressed, so easily that in practice we often assume that an $\EM$-diagram has already had the next steps performed.
\begin{enumerate}
\item Each degree-two boundary vertex should be replaced with the shape $Y$.  Hence each $\EM$ has a unique trivalent vertex, which we think of as the intersection of its two arcs.  (One might say that these are $\EM$-diagrams rather than $n n$-diagrams.)
\item Edges should be directed so that:
\begin{enumerate}
\item The edges in each $\EM$ are directed away from the boundary and towards the trivalent vertex on the $\EM$.  (The direction of each edge remains the same across any intersections with other $\EM$s.)
\item The edges in an isolated arc should be directed from the boundary vertex on the bottom row of the Young tableau to the boundary vertex on the middle row of the tableau.
\end{enumerate}
\end{enumerate}

Given these conventions, there is a unique way to make the degree-four vertex at the intersection of two arcs trivalent.  We call this process {\em resolving} the diagram, and describe it in the next lemma.

\begin{lemma} \label{lemma:trivalization}
Let $v$ be a 4-valent interior vertex in an $\EM$-diagram.  There is a unique way to replace $v$ with a pair of trivalent vertices so that the $\EM$-diagram is unchanged outside of a small neighborhood of $v$.
\end{lemma}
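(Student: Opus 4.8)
The plan is to reduce the statement to a purely local combinatorial count at the crossing. First I would pin down the local picture at $v$. By Proposition \ref{proposition:basic noncrossing props}, any two arcs meet in at most one point and the diagram looks locally like an $X$ there, and by Corollary \ref{corollary:two ms relative} at most two arcs pass through any point of a three-row $\EM$-diagram. Hence $v$ is the transverse crossing of exactly two distinct arcs, call them $A$ and $B$, and $v$ has exactly four incident half-edges. Reading them in cyclic (planar) order as $e_1, e_2, e_3, e_4$, the two half-edges of $A$ form the opposite pair $\{e_1, e_3\}$ and those of $B$ form $\{e_2, e_4\}$, since the arcs cross transversally.

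Next I would do the direction bookkeeping, which is the heart of the argument. After Step (2) of the construction every arc carries a consistent orientation that is preserved across crossings, so traversing $A$ from tail to head passes through $v$ exactly once: it enters along one half-edge and leaves along the other. Thus $A$ contributes precisely one half-edge directed into $v$ and one directed out, and likewise for $B$; in particular $v$ has exactly two incoming and two outgoing half-edges. The key observation is that the two outgoing half-edges cannot be an opposite pair $\{e_1, e_3\}$ or $\{e_2, e_4\}$, because each such pair belongs to a single arc, which presents one in-edge and one out-edge. Therefore the two outgoing half-edges are one from $A$ and one from $B$, hence adjacent in the cyclic order, and the same holds for the two incoming half-edges.

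I would then enumerate the candidate resolutions. A $4$-valent vertex admits exactly two planar replacements by a pair of trivalent vertices joined by a single edge that leave the four external half-edges (and the rest of the diagram) fixed: one groups $\{e_1,e_2\}$ on one new vertex against $\{e_3,e_4\}$ on the other, and the second groups $\{e_2,e_3\}$ against $\{e_4,e_1\}$. For the two new trivalent vertices to be admissible web vertices, each must be a source or a sink, so the connecting edge must separate the two outgoing half-edges from the two incoming ones; that is, the grouping must place both out-edges on one vertex and both in-edges on the other. By the previous paragraph the out-edges (and the in-edges) form adjacent pairs, so exactly one of the two smoothings achieves this separation. Orienting the new connecting edge from the all-out vertex (now a source) to the all-in vertex (now a sink) produces a valid web fragment, and it is the only resolution compatible with the fixed external data. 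This establishes both existence and uniqueness.

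I expect the only subtle point to be the adjacency claim in the second paragraph, namely that the two outgoing half-edges are never antipodal at $v$. This is exactly what forbids the ``wrong'' smoothing and forces uniqueness, and it rests entirely on the fact that a single arc is coherently oriented and so cannot present two out-edges (or two in-edges) at a crossing. Everything else is the standard observation that a transverse crossing has precisely two planar resolutions, of which the source/sink condition selects one.
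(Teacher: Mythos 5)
Your proof is correct and follows essentially the same route as the paper: each arc is coherently oriented through the crossing, so $v$ has exactly two in-edges and two out-edges, and the unique partition into the in-pair and the out-pair dictates the resolution, with the new edge directed from the source to the sink. Your extra care in checking that this partition is realized by one of the two \emph{planar} smoothings (via the adjacency of the two out-edges in the cyclic order) fills in a detail the paper's proof leaves implicit, but it is the same argument.
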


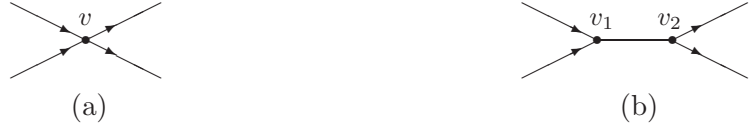
\begin{figure}[h]
\hspace{0.4in}
\begin{picture}(40,20)(0,0)
\put(5,5){\line(2,1){20}}
\put(11,8){\vector(2,1){2}}
\put(17,11){\vector(2,1){2}}
\put(5,15){\line(2,-1){20}}
\put(11,12){\vector(2,-1){2}}
\put(17,9){\vector(2,-1){2}}
\put(15, 10){\circle*{1}}
\put(14, 12){\small{$v$}}
\put(13,0){(a)}
\end{picture}
\hspace{1in}
\begin{picture}(40,20)(0,0)
\put(5,15){\line(2,-1){10}}
\put(11,8){\vector(2,1){2}}
\put(11,12){\vector(2,-1){2}}
\put(5,5){\line(2,1){10}}
\put(15, 10){\circle*{1}}
\put(14, 12){\small{$v_1$}}
\put(15,10){\line(1,0){10}}
\put(25, 10){\circle*{1}}
\put(23,12){\small{$v_2$}}
\put(25,10){\line(2,1){10}}
\put(27,11){\vector(2,1){2}}
\put(27,9){\vector(2,-1){2}}
\put(25,10){\line(2,-1){10}}
\put(18,0){(b)}
\end{picture}
\caption{Trivalizing vertices}\label{trivalizing vertices}
\end{figure}

\begin{proof}
A four-valent vertex $v$ in an $\EM$-diagram occurs when two directed arcs cross, which looks locally like Figure \ref{trivalizing vertices} (a).  Each arc is directed, so there are two edges incident to $v$ that are directed in and two that are directed out.  Vertices in the web must be trivalent with incident edges all directed in or all directed out.  There is one way to partition the edges incident to $v$ into in-edges and out-edges.  Create a new vertex $v_1$ incident to the in-edges and a new vertex $v_2$ incident to the out-edges.  An edge between $v_1$ and $v_2$ makes both vertices trivalent, and must be directed $v_2 \mapsto v_1$ to satisfy the conditions of the web.  Locally this creates the diagram in Figure \ref{trivalizing vertices} (b).
\end{proof}

\subsection{Examples} \label{section: web examples}

Extending Section \ref{section: m-diagram examples}, we give the web corresponding to each $\EM$-diagram for the standard tableaux of shape $3 \times 2$.  (Each web is a planar graph; each graph here is a reasonably symmetric example from its isomorphism class.)

\[
\begin{array}{|c|c|}
\cline{1-2}
3 & 6\\
\cline{1-2}
2 & 5\\
\cline{1-2}
1 & 4\\
\cline{1-2}
\end{array} 
\hspace{0.5in} 
\begin{picture}(60, 10)(0, 5)
\put(0,5){\line(1,0){60}}
\put(5,4){\line(0,1){2}}
\put(15,4){\line(0,1){2}}
\put(25,4){\line(0,1){2}}
\put(35,4){\line(0,1){2}}
\put(45,4){\line(0,1){2}}
\put(55,4){\line(0,1){2}}
\put(5,2){\makebox(0,0){$1$}}
\put(15,2){\makebox(0,0){$2$}}
\put(25,2){\makebox(0,0){$3$}}
\put(35,2){\makebox(0,0){$4$}}
\put(45,2){\makebox(0,0){$5$}}
\put(55,2){\makebox(0,0){$6$}}
\put(10,5){\oval(10,10)[t]}
\put(40,5){\oval(10,10)[t]}
\put(20,5){\oval(10,10)[t]}
\put(50,5){\oval(10,10)[t]}
\end{picture}
\hspace{0.5in} 
\begin{picture}(60, 10)(0, 5)
\put(0,5){\line(1,0){60}}
\put(5,4){\line(0,1){2}}
\put(15,4){\line(0,1){2}}
\put(25,4){\line(0,1){2}}
\put(35,4){\line(0,1){2}}
\put(45,4){\line(0,1){2}}
\put(55,4){\line(0,1){2}}
\put(5,2){\makebox(0,0){$1$}}
\put(15,2){\makebox(0,0){$2$}}
\put(25,2){\makebox(0,0){$3$}}
\put(35,2){\makebox(0,0){$4$}}
\put(45,2){\makebox(0,0){$5$}}
\put(55,2){\makebox(0,0){$6$}}
\put(15,5){\oval(20,10)[t]}
\put(45,5){\oval(20,10)[t]}
\put(15,5){\line(0,1){5}}
\put(45,5){\line(0,1){5}}
\put(15,7){\vector(0,1){2}}
\put(12,10){\vector(1,0){2}}
\put(18,10){\vector(-1,0){2}}
\put(45,7){\vector(0,1){2}}
\put(42,10){\vector(1,0){2}}
\put(48,10){\vector(-1,0){2}}
\end{picture}
\]
\[\begin{array}{|c|c|}
\cline{1-2}
4 & 6\\
\cline{1-2}
3 & 5\\
\cline{1-2}
1 & 2\\
\cline{1-2}
\end{array}
\hspace{0.5in} 
\begin{picture}(60, 10)(0, 5)
\put(0,5){\line(1,0){60}}
\put(5,4){\line(0,1){2}}
\put(15,4){\line(0,1){2}}
\put(25,4){\line(0,1){2}}
\put(35,4){\line(0,1){2}}
\put(45,4){\line(0,1){2}}
\put(55,4){\line(0,1){2}}
\put(5,2){\makebox(0,0){$1$}}
\put(15,2){\makebox(0,0){$2$}}
\put(25,2){\makebox(0,0){$3$}}
\put(35,2){\makebox(0,0){$4$}}
\put(45,2){\makebox(0,0){$5$}}
\put(55,2){\makebox(0,0){$6$}}
\put(20,5){\oval(10,10)[t]}
\put(30,5){\oval(10,10)[t]}
\put(25,5){\oval(40,15)[t]}
\put(50,5){\oval(10,10)[t]}
\end{picture}
\hspace{0.5in} 
\begin{picture}(60, 10)(0, 5)
\put(0,5){\line(1,0){60}}
\put(5,4){\line(0,1){2}}
\put(15,4){\line(0,1){2}}
\put(25,4){\line(0,1){2}}
\put(35,4){\line(0,1){2}}
\put(45,4){\line(0,1){2}}
\put(55,4){\line(0,1){2}}
\put(5,2){\makebox(0,0){$1$}}
\put(15,2){\makebox(0,0){$2$}}
\put(25,2){\makebox(0,0){$3$}}
\put(35,2){\makebox(0,0){$4$}}
\put(45,2){\makebox(0,0){$5$}}
\put(55,2){\makebox(0,0){$6$}}
\put(25,5){\oval(20,10)[t]}
\put(25,5){\line(0,1){5}}
\put(30,5){\oval(50,16)[t]}
\put(45,5){\line(0,1){8}}
\put(25,7){\vector(0,1){2}}
\put(22,10){\vector(1,0){2}}
\put(28,10){\vector(-1,0){2}}
\put(45,10){\vector(0,1){2}}
\put(41,13){\vector(1,0){2}}
\put(48,13){\vector(-1,0){2}}
\end{picture}
\]
\[\begin{array}{|c|c|}
\cline{1-2}
5 & 6\\
\cline{1-2}
2 & 4\\
\cline{1-2}
1 & 3\\
\cline{1-2}
\end{array}
\hspace{0.5in} 
\begin{picture}(60, 10)(0, 5)
\put(0,5){\line(1,0){60}}
\put(5,4){\line(0,1){2}}
\put(15,4){\line(0,1){2}}
\put(25,4){\line(0,1){2}}
\put(35,4){\line(0,1){2}}
\put(45,4){\line(0,1){2}}
\put(55,4){\line(0,1){2}}
\put(5,2){\makebox(0,0){$1$}}
\put(15,2){\makebox(0,0){$2$}}
\put(25,2){\makebox(0,0){$3$}}
\put(35,2){\makebox(0,0){$4$}}
\put(45,2){\makebox(0,0){$5$}}
\put(55,2){\makebox(0,0){$6$}}
\put(30,5){\oval(10,10)[t]}
\put(40,5){\oval(10,10)[t]}
\put(35,5){\oval(40,15)[t]}
\put(10,5){\oval(10,10)[t]}
\end{picture}
\hspace{0.5in} 
\begin{picture}(60, 10)(0, 5)
\put(0,5){\line(1,0){60}}
\put(5,4){\line(0,1){2}}
\put(15,4){\line(0,1){2}}
\put(25,4){\line(0,1){2}}
\put(35,4){\line(0,1){2}}
\put(45,4){\line(0,1){2}}
\put(55,4){\line(0,1){2}}
\put(5,2){\makebox(0,0){$1$}}
\put(15,2){\makebox(0,0){$2$}}
\put(25,2){\makebox(0,0){$3$}}
\put(35,2){\makebox(0,0){$4$}}
\put(45,2){\makebox(0,0){$5$}}
\put(55,2){\makebox(0,0){$6$}}
\put(35,5){\oval(20,10)[t]}
\put(35,5){\line(0,1){5}}
\put(30,5){\oval(50,16)[t]}
\put(15,5){\line(0,1){8}}
\put(35,7){\vector(0,1){2}}
\put(32,10){\vector(1,0){2}}
\put(38,10){\vector(-1,0){2}}
\put(15,10){\vector(0,1){2}}
\put(12,13){\vector(1,0){2}}
\put(19,13){\vector(-1,0){2}}
\end{picture}
\]
\[\begin{array}{|c|c|}
\cline{1-2}
5 & 6\\
\cline{1-2}
3 & 4\\
\cline{1-2}
1 & 2\\
\cline{1-2}
\end{array}
\hspace{0.5in} 
\begin{picture}(60, 10)(0, 8)
\put(0,5){\line(1,0){60}}
\put(5,4){\line(0,1){2}}
\put(15,4){\line(0,1){2}}
\put(25,4){\line(0,1){2}}
\put(35,4){\line(0,1){2}}
\put(45,4){\line(0,1){2}}
\put(55,4){\line(0,1){2}}
\put(5,2){\makebox(0,0){$1$}}
\put(15,2){\makebox(0,0){$2$}}
\put(25,2){\makebox(0,0){$3$}}
\put(35,2){\makebox(0,0){$4$}}
\put(45,2){\makebox(0,0){$5$}}
\put(55,2){\makebox(0,0){$6$}}
\put(20,5){\oval(10,10)[t]}
\put(40,5){\oval(10,10)[t]}
\put(40,5){\oval(30,18)[t]}
\put(20,5){\oval(30,15)[t]}
\end{picture}
\hspace{0.5in} 
\begin{picture}(60, 15)(0, 8)
\put(0,5){\line(1,0){60}}
\put(5,4){\line(0,1){2}}
\put(15,4){\line(0,1){2}}
\put(25,4){\line(0,1){2}}
\put(35,4){\line(0,1){2}}
\put(45,4){\line(0,1){2}}
\put(55,4){\line(0,1){2}}
\put(5,2){\makebox(0,0){$1$}}
\put(15,2){\makebox(0,0){$2$}}
\put(25,2){\makebox(0,0){$3$}}
\put(35,2){\makebox(0,0){$4$}}
\put(45,2){\makebox(0,0){$5$}}
\put(55,2){\makebox(0,0){$6$}}
\put(20,5){\oval(10,10)[t]}
\put(40,5){\oval(10,10)[t]}
\put(30,10){\oval(20,10)[t]}
\put(30,5){\oval(50,26)[t]}
\put(30,15){\line(0,1){3}}
\put(25,18){\vector(1,0){2}}
\put(35,18){\vector(-1,0){2}}
\put(30,15){\vector(0,1){2}}
\put(27,15){\vector(-1,0){2}}
\put(33,15){\vector(1,0){2}}
\put(15,7){\vector(1,1){2}}
\put(25,7){\vector(-1,1){2}}
\put(35,7){\vector(1,1){2}}
\put(45,7){\vector(-1,1){2}}
\end{picture}
\]
\[\begin{array}{|c|c|}
\cline{1-2}
4 & 6\\
\cline{1-2}
2 & 5\\
\cline{1-2}
1 & 3\\
\cline{1-2}
\end{array}
\hspace{0.5in} 
\begin{picture}(60, 10)(0, 8)
\put(0,5){\line(1,0){60}}
\put(5,4){\line(0,1){2}}
\put(15,4){\line(0,1){2}}
\put(25,4){\line(0,1){2}}
\put(35,4){\line(0,1){2}}
\put(45,4){\line(0,1){2}}
\put(55,4){\line(0,1){2}}
\put(5,2){\makebox(0,0){$1$}}
\put(15,2){\makebox(0,0){$2$}}
\put(25,2){\makebox(0,0){$3$}}
\put(35,2){\makebox(0,0){$4$}}
\put(45,2){\makebox(0,0){$5$}}
\put(55,2){\makebox(0,0){$6$}}
\put(10,5){\oval(10,10)[t]}
\put(50,5){\oval(10,10)[t]}
\put(25,5){\oval(20,17)[t]}
\put(35,5){\oval(20,15)[t]}
\end{picture}
\hspace{0.5in} 
\begin{picture}(60, 15)(0, 8)
\put(0,5){\line(1,0){60}}
\put(5,4){\line(0,1){2}}
\put(15,4){\line(0,1){2}}
\put(25,4){\line(0,1){2}}
\put(35,4){\line(0,1){2}}
\put(45,4){\line(0,1){2}}
\put(55,4){\line(0,1){2}}
\put(5,2){\makebox(0,0){$1$}}
\put(15,2){\makebox(0,0){$2$}}
\put(25,2){\makebox(0,0){$3$}}
\put(35,2){\makebox(0,0){$4$}}
\put(45,2){\makebox(0,0){$5$}}
\put(55,2){\makebox(0,0){$6$}}
\put(10,5){\oval(10,10)[t]}
\put(30,5){\oval(10,10)[t]}
\put(50,5){\oval(10,10)[t]}
\put(30,10){\oval(40,16)[t]}
\put(30,10){\line(0,1){8}}
\put(5,7){\vector(1,1){2}}
\put(15,7){\vector(-1,1){2}}
\put(25,7){\vector(1,1){2}}
\put(35,7){\vector(-1,1){2}}
\put(45,7){\vector(1,1){2}}
\put(55,7){\vector(-1,1){2}}
\put(25,18){\vector(-1,0){2}}
\put(35,18){\vector(1,0){2}}
\put(30,15){\vector(0,-1){2}}
\end{picture}
\]

\subsection{Webs obtained from $\EM$-diagrams are reduced} \label{section: reduced webs}
Resolved $\EM$-diagrams are webs by construction: they are planar directed graphs; each internal vertex is trivalent and each vertex on the boundary line has degree one; and each vertex is a source or a sink (since each trivalent vertex was constructed to have either all incident edges directed in or all incident edges directed out).  In fact, a stronger condition holds.

\begin{definition}
A web in the $\mathfrak{sl}_3$-spider is reduced if it is non-elliptic, namely each (interior) face has at least six edges on its boundary.
\end{definition}

The following series of small lemmas proves that the webs obtained from $\EM$-diagrams are reduced.

\begin{lemma}
Let $T$ be a Young tableau of arbitrary shape.  The web obtained by resolving the $\EM$-diagram for $T$ has no face with two edges on its boundary.
\end{lemma}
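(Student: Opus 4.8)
The plan is to rule out a two-edge face by ruling out the only combinatorial configuration that could support one. First I would observe that if a face $F$ has exactly two edges $e_1,e_2$ on its boundary, those edges meet at two vertices $u,w$, each incident to both edges. Since every boundary vertex has degree one, both $u$ and $w$ must be interior trivalent vertices, and $e_1,e_2$ form a pair of distinct parallel edges with endpoints $\{u,w\}$ (there are no loops, as every edge of the resolved diagram joins two distinct points). So it suffices to prove that the resolved $\EM$-diagram contains no two distinct edges sharing both endpoints.

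Next I would classify the pieces produced by the construction. Every interior trivalent vertex is either a $Y$-vertex $T_j$ coming from a degree-two boundary point $j$, or one of the two vertices $v_1$ (all incident edges directed in) and $v_2$ (all directed out) created when a crossing is resolved as in Lemma \ref{lemma:trivalization}. Correspondingly, every edge is either the stem joining a boundary vertex $j$ to $T_j$, the resolution edge $v_2\mapsto v_1$ at a crossing, or a maximal sub-arc (an \emph{arc-piece}) of one of the semicircular arcs lying between two consecutive special points. The facts I would invoke are that each arc is a simple curve; that by Proposition \ref{proposition:basic noncrossing props} two distinct arcs meet in at most one point; and that the two arcs of a single $\EM$ lie on adjacent intervals meeting only at their common middle vertex $j$, so they do not cross, and $T_j$ is incident to no other arcs.

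Finally I would run the case analysis on the types of $e_1,e_2$. Any edge involving a stem has a degree-one endpoint, so it cannot be one of two parallel edges; this disposes of every stem case. Two resolution edges come from two distinct crossings, hence from disjoint pairs of fresh vertices, and share no endpoint at all. If $e_1$ is the resolution edge at a crossing, then every other edge at $v_1$ (resp.\ $v_2$) is the incoming (resp.\ outgoing) half of one of the two crossing arcs, and such a half runs away from the crossing, so its far endpoint is not $v_2$ (resp.\ $v_1$); thus no second edge joins $v_1$ to $v_2$. Lastly, if both $e_1,e_2$ are arc-pieces, I would show they share at most one endpoint. Two pieces of a single arc share no endpoint, because an arc is cut only at crossings and there its incoming half terminates at $v_1$ while its outgoing half resumes at $v_2\neq v_1$. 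Two pieces of distinct arcs $A,B$ can meet only where $A$ and $B$ meet, namely at a single crossing or at a common middle vertex $T_j$; a single arc-piece contains at most one of the vertices $v_1,v_2$ produced at a crossing, and $A,B$ cannot both cross and share a $T_j$ (arcs of a common $\EM$ do not cross). Hence any two arc-pieces share at most one endpoint, so $e_1,e_2$ are not parallel. In every case $e_1$ and $e_2$ fail to be parallel, so no two-edge face exists.

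I expect the crossing case to be the main obstacle, because after resolution a single crossing contributes two distinct vertices $v_1$ and $v_2$ and both crossing arcs touch both of them, so it is tempting to think the two arcs ``share two endpoints.'' The careful point is that the relevant objects are individual arc-pieces, and a single piece meets a given crossing in only one of $v_1,v_2$; the orientation convention (all arcs directed away from the boundary toward the trivalent vertices) is exactly what forces each half to point away from the crossing and prevents an arc's incoming and outgoing halves, separated by the resolution edge, from reconnecting to bound a lens.
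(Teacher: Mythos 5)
Your proof is correct and rests on the same key fact as the paper's: a two-edge face would force two arcs to meet in two points, contradicting Proposition \ref{proposition:basic noncrossing props}. The paper's own proof is exactly this one-line observation; your case analysis (stems, resolution edges, arc-pieces) just makes explicit the routine eliminations that the paper leaves implicit, so the approaches are essentially the same.
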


\begin{proof}
A face with two edges is bounded by two arcs which cross each other twice.  This does not happen in $\EM$-diagrams, by Proposition \ref{proposition:basic noncrossing props}.
\end{proof}

\begin{lemma}
Let $T$ be a Young tableau of arbitrary shape.  The web obtained by resolving the $\EM$-diagram for $T$ has no face with an odd number of edges on its boundary.
\end{lemma}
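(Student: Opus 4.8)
The plan is to exploit the single structural feature that makes $\mathfrak{sl}_3$-webs special: every vertex is a source or a sink. This forces the underlying graph to be bipartite, so it has no odd cycle, and since each interior face is bounded by a cycle, that face must have an even number of edges. I would therefore prove a parity statement rather than do any combinatorics on arcs directly.

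First I would record the elementary observation that in the resolved $\EM$-diagram every directed edge runs from a source to a sink. Indeed, if an edge $e$ is directed $u \to v$, then $e$ is an out-edge at $u$, so $u$ is not a sink and must be a source; and $e$ is an in-edge at $v$, so $v$ must be a sink. As noted just before the definition of \emph{reduced}, the resolution (via Lemma \ref{lemma:trivalization} at the crossings, together with the directing conventions on $\EM$s and isolated arcs) produces exactly such a diagram: every internal vertex is trivalent with all incident edges directed in or all directed out. Coloring the sources one color and the sinks the other is therefore a proper $2$-coloring of the undirected web, so the web is bipartite and contains no odd closed walk.

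Next I would translate this into the face statement. Reading \emph{face} as an interior face (the ones governed by the non-elliptic condition, bounded entirely by web edges), its boundary is a cycle $v_1, v_2, \ldots, v_m, v_1$. Traversing this cycle, the colors of the $v_i$ must alternate source, sink, source, sink, \ldots, because consecutive vertices are joined by an edge and every edge joins a source to a sink. Equivalently, the orientation of the boundary edges alternates forward/backward around the cycle: the edge $v_i v_{i+1}$ points away from $v_i$ exactly when $v_i$ is a source, forcing $v_{i+1}$ to be a sink and the next edge to point backward. Returning to $v_1$ after $m$ steps is consistent with $v_1$ having a fixed color only when $m$ is even. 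Hence every interior face has an even number of edges on its boundary.

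The one point needing care---the main, though minor, obstacle---is justifying that an interior face is bounded by an honest cycle rather than a more general closed walk that might reuse a bridge, and (for the general-shape version of the statement) confirming that the resolution really does make every vertex a source or a sink so that the $2$-coloring exists. The first concern is harmless: the alternation argument applies verbatim to any closed walk, since a closed walk in a bipartite graph still has even length, and in any case the previous lemma together with Proposition \ref{proposition:basic noncrossing props} rules out the degenerate low-valence faces, so the relevant interior faces are genuine cycles. The second concern is exactly the source/sink property already established by construction, which is all the argument consumes.
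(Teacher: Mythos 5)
Your argument is correct and is essentially the paper's own proof: both use the fact that every vertex of the resolved diagram is a source or a sink, so edge orientations alternate around any cycle (equivalently, the graph is bipartite), forcing every face boundary to have even length. The extra care you take about closed walks versus cycles is fine but not needed beyond what the paper already does.
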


\begin{proof}
Each edge is directed, and each vertex is either a sink (all edges are oriented in) or a source (all edges are oriented out).  Hence the edges in any (undirected) cycle alternate orientations, and so every cycle in the graph must have an even number of edges.
\end{proof}

\begin{lemma}
The web obtained by resolving the $\EM$-diagram for a three-row Young tableau $T$ has no interior face with exactly four edges on its boundary.
\end{lemma}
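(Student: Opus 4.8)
The plan is to suppose that $F$ is an interior face with exactly four bounding edges and to derive a contradiction from the combinatorics of how arcs meet, as recorded in Proposition \ref{proposition:basic noncrossing props} and Corollary \ref{corollary:two ms relative}.

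First I would pin down what the corners and edges of $F$ can be. An interior face meets no boundary vertex, since those have degree one and so are leaves lying on no cycle; likewise it meets no stem edge of an $\EM$, since such an edge is a bridge to a degree-one boundary vertex and therefore borders a single face on both sides. Hence every corner of $F$ is an internal trivalent vertex and every edge of $F$ is either an arc-segment or one of the short edges created in resolving a crossing (Lemma \ref{lemma:trivalization}). By the preceding lemma the orientations alternate around any cycle, so the four corners of $F$ alternate source and sink. Each internal trivalent vertex is an $\EM$-vertex (a sink, as its arcs and stem all point into it), a crossing-sink $v_1$, or a crossing-source $v_2$; so the two source-corners of $F$ are both of type $v_2$, and the two sink-corners are $\EM$-vertices or of type $v_1$.

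Next I would pass from the resolved web back to the underlying arrangement of arcs while tracking how resolution changes face sizes. Near a crossing the cyclic order of the four incident edges is in, in, out, out, so exactly two of the four local sectors lie between an in-edge and an out-edge; resolving the crossing inserts the short edge $v_2v_1$ into precisely the two faces occupying those sectors, enlarging each by one edge and leaving the other two unchanged. Thus $F$ descends from an arrangement face of size $4-s$, where $s\in\{0,1,2\}$ counts the short edges on $\partial F$. A size-two arrangement face would be bounded by two arcs meeting in two points, which Proposition \ref{proposition:basic noncrossing props} forbids, so $s\le 1$; it remains to exclude an arrangement quadrilateral all of whose corners sit at $\EM$-vertices or in non-in–out crossing sectors, and an arrangement triangle with exactly one in–out crossing corner.

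The core of the argument is this last step, and it is where I expect the real work to lie. Here I would traverse the small arrangement region and read off, for each bounding arc, its first/second-arc type together with the left-to-right order $i<j<k$ of the $\EM$s involved. Corollary \ref{corollary:two ms relative} constrains the picture sharply: each crossing pairs a first (or isolated) arc with the second arc of a different $\EM$, same-level arcs do not cross, and any two $\EM$s cross at most once and occupy one of only five relative positions. The aim is to show that closing the region into a triangle or a quadrilateral forces two arcs to cross twice, or two $\EM$s to cross twice or to sit in a sixth, impossible relative position, or else contradicts the maximality defining $j$ (equivalently the column- and row-strictness of $T$). A useful orienting fact is that any two $\EM$s alone enclose no bounded region — their resolved diagram is a forest — so $\partial F$ must draw on at least three $\EM$s, and tracing their arcs should show that the smallest region these semicircular arcs can enclose away from the boundary line is a hexagon. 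I expect the main obstacle to be the bookkeeping that simultaneously matches each local sector to its in/out type and tracks the orderings $i<j<k$, since that is exactly what is needed to eliminate the deceptive triangle-with-one-short-edge case.
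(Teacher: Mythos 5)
Your setup is sound and runs parallel to the paper's: you correctly exclude boundary vertices and stem edges from the boundary of an interior face, correctly observe that resolving a crossing inserts the short edge into exactly the two in--out sectors, and correctly reduce to excluding an arrangement quadrilateral ($s=0$) and an arrangement triangle with one in--out crossing corner ($s=1$), with $s\le 1$ following from Proposition \ref{proposition:basic noncrossing props}. The observation that two $\EM$s alone bound no region, so at least three $\EM$s are involved, is also a correct and useful orientation.

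However, the proof stops exactly where the content of the lemma lies. You state that the remaining task is to show that closing the region forces a forbidden crossing pattern, list several contradictions one might hope to reach, and note that you ``expect the main obstacle to be the bookkeeping'' --- but that bookkeeping is never carried out, and it is not routine. The paper closes these cases with a specific mechanism that your proposal does not identify: \emph{orientation along the boundary line}. By the construction in Section \ref{section: resolved m definition}, first arcs (including isolated arcs) are directed left-to-right and second arcs right-to-left; since arcs of the same type never cross (Proposition \ref{proposition:basic noncrossing props}), any two crossing arcs are directed oppositely. The alternation of in- and out-edges around the putative four-cycle pins down the relative positions and types of the bounding arcs, and then, whichever endpoint is leftmost, two of the leftmost three arcs are forced to cross while being directed the same way --- a contradiction. (The subcase with two opposite $\EM$-vertices falls to Corollary \ref{corollary:two ms relative} because the two $\EM$s would cross twice, and the subcase with a resolution edge on the face produces three pairwise-crossing arcs, impossible since two of any three arcs share a type.) Without executing some version of this directional analysis, the quadrilateral and triangle configurations are not actually excluded, so the argument as written does not establish the lemma.
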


\begin{proof}
{\em All $\EM$-diagrams in this proof correspond to three-row Young tableaux.}  The proof is by contradiction: we locally reconstruct arcs that could produce a face with four edges, and then prove that $\EM$-diagrams contain no such arrangement of arcs.  The previous lemma showed that the edges in each cycle in an $\EM$-diagram alternate orientation, as in Figure \ref{figure:simple square}.

\begin{figure}[h]
\begin{picture}(40,20)(-10,-2)
\put(0,0){\circle*{1}}
\put(0,15){\circle*{1}}
\put(15,0){\circle*{1}}
\put(15,15){\circle*{1}}

\put(1,2){$v_1$}
\put(1,12){$v_2$}
\put(11,12){$v_3$}
\put(11,2){$v_4$}

\put(0,0){\vector(0,1){8}}
\put(0,0){\line(0,1){15}}
\put(0,0){\vector(1,0){8}}
\put(0,0){\line(1,0){15}}

\put(15,15){\vector(0,-1){7}}
\put(15,15){\line(0,-1){15}}
\put(15,15){\vector(-1,0){7}}
\put(15,15){\line(-1,0){15}}

\put(0,0){\vector(-1,-1){5}}
\put(15,15){\vector(1,1){5}}
\put(-5,20){\line(1,-1){5}}
\put(20,-5){\line(-1,1){5}}
\end{picture}

\caption{Face with four edges}\label{figure:simple square}
\end{figure}
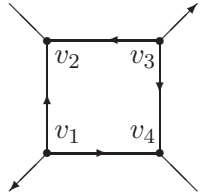

We first show that none of the edges of the square were added while resolving the $\EM$-diagram.  Assume otherwise.  After resolving a vertex as in Lemma \ref{lemma:trivalization}, the edges associated to a single arc are on different faces.  Hence an arc that enters the square at $v_2$ cannot leave from either $v_1$ or $v_3$ (and similarly for arcs entering at $v_4$).  Without loss of generality we conclude that an arc that enters at $v_2$ must proceed along the edge $v_3v_4$, as in Figure \ref{figure:resolving in square}.  
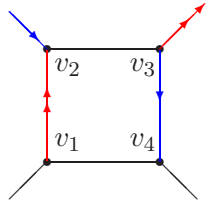
\begin{figure}[h]
\begin{picture}(40,20)(-10,-2)
\put(0,0){\circle*{1}}
\put(0,15){\circle*{1}}
\put(15,0){\circle*{1}}
\put(15,15){\circle*{1}}

\put(1,2){$v_1$}
\put(1,12){$v_2$}
\put(11,12){$v_3$}
\put(11,2){$v_4$}

\put(0,0){\color{red}{\vector(0,1){8}}}
\put(0,2){\color{red}{\vector(0,1){8}}}
\put(0,0){\color{red}{\line(0,1){15}}}
\put(0,0){\line(1,0){15}}

\put(15,15){\color{blue}{\vector(0,-1){7}}}
\put(15,15){\color{blue}{\line(0,-1){15}}}
\put(15,15){\line(-1,0){15}}

\put(0,0){\line(-1,-1){5}}
\put(15,15){\color{red}{\vector(1,1){4}}}
\put(16,16){\color{red}{\vector(1,1){5}}}
\put(-5,20){\color{blue}{\line(1,-1){5}}}
\put(-5,20){\color{blue}{\vector(1,-1){4}}}
\put(20,-5){\line(-1,1){5}}
\end{picture}

\caption{Face with edge added during the trivializing process}\label{figure:resolving in square}
\end{figure}
Arcs in an $\EM$-diagram cross at most once by Proposition \ref{proposition:basic noncrossing props}.  We conclude that the arc $v_3v_4$ cannot cross the arc $v_1v_2$, and so leaves the face at $v_4$ rather than $v_1$.  Hence there is a third arc that must be incident to both $v_1$ and $v_4$.  Each pair of these three arcs cross (at $v_1$, $v_2$, or $v_4$ respectively).  Each arc in the $\EM$-diagram of a three-row tableau is a first or second arc.  First arcs can only cross second arcs by Proposition \ref{proposition:basic noncrossing props}, so this configuration of arcs does not come from an $\EM$-diagram.

All edges incident to the original trivalent vertex on an $\EM$ are oriented inwards, so at most two of the vertices in the four-cycle are the original trivalent vertex on an $\EM$.  If both $v_2$ and $v_4$ are vertices from the $\EM$-diagram, then two $\EM$s cross twice (namely at $v_1$ and $v_3$), contradicting Corollary \ref{corollary:two ms relative}.  

Suppose at most one of the vertices is a vertex from the original $\EM$-diagram, without loss of generality $v_4$.  Two edges from the same arc cannot bound the same face after resolving as in Lemma \ref{lemma:trivalization}, so the four arcs bounding the square must have the relative positions shown in Figure \ref{figure:arcs on square}.  We mark these arcs with one, two, three, and four arrows, and 
\begin{figure}[h]
\begin{picture}(40,29)(-10,-6)
\put(0,0){\circle*{1}}
\put(0,15){\circle*{1}}
\put(15,0){\circle*{1}}
\put(15,15){\circle*{1}}

\put(1,2){$v_1$}
\put(1,12){$v_2$}
\put(11,12){$v_3$}
\put(11,2){$v_4$}

\put(0,0){\color{red}{\vector(0,1){8}}}
\put(0,2){\color{red}{\vector(0,1){8}}}
\put(0,0){\color{red}{\line(0,1){15}}}
\put(0,0){\color{green}{\vector(1,0){6}}}
\put(2,0){\color{green}{\vector(1,0){6}}}
\put(4,0){\color{green}{\vector(1,0){6}}}
\put(0,0){\color{green}{\line(1,0){15}}}

\put(15,15){\color{blue}{\vector(0,-1){7}}}
\put(15,15){\color{blue}{\line(0,-1){15}}}
\put(15,15){\vector(-1,0){5}}
\put(13,15){\vector(-1,0){5}}
\put(11,15){\vector(-1,0){5}}
\put(9,15){\vector(-1,0){5}}
\put(15,15){\line(-1,0){15}}

\put(0,0){\line(-1,-1){5}}
\put(15,15){\line(1,1){5}}
\put(-5,20){\line(1,-1){5}}
\put(20,-5){\line(-1,1){5}}

\put(-12,-5){\color{green}{\vector(1,0){3}}}
\put(-10,-5){\color{green}{\vector(1,0){3}}}
\put(-8,-5){\color{green}{\vector(1,0){3}}}

\put(20,-5){\color{green}{\vector(1,0){3}}}
\put(22,-5){\color{green}{\vector(1,0){3}}}
\put(24,-5){\color{green}{\vector(1,0){3}}}

\put(-5,-11){\color{red}{\vector(0,1){3}}}
\put(-5,-9){\color{red}{\vector(0,1){3}}}
\put(-5,-10){\color{red}{\line(0,1){5}}}

\put(-5,20){\color{red}{\vector(0,1){3}}}
\put(-5,22){\color{red}{\vector(0,1){3}}}
\put(-5,20){\color{red}{\line(0,1){5}}}

\put(26,20){\vector(-1,0){3}}
\put(25,20){\vector(-1,0){3}}
\put(24,20){\vector(-1,0){3}}
\put(23,20){\vector(-1,0){3}}

\put(-5,20){\vector(-1,0){3}}
\put(-6,20){\vector(-1,0){3}}
\put(-7,20){\vector(-1,0){3}}
\put(-8,20){\vector(-1,0){3}}

\put(20,25){\color{blue}{\vector(0,-1){3}}}
\put(20,25){\color{blue}{\line(0,-1){5}}}

\put(20,-5){\color{blue}{\vector(0,-1){3}}}
\put(20,-5){\color{blue}{\line(0,-1){5}}}

\end{picture}

\caption{Arcs from original $\EM$-diagram}\label{figure:arcs on square}
\end{figure}
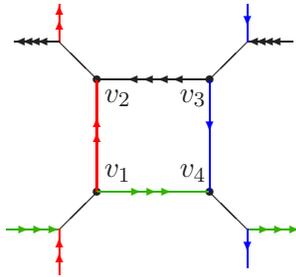
call them 1,2,3,4, respectively.   (If $v_4$ is a vertex from the original $\EM$-diagram, then arcs 1 and 3 do not continue out of the square in the bottom right of Figure \ref{figure:arcs on square}.)  

First arcs and second arcs do not cross, so we conclude from the interior square that either 3, 4 are both first arcs and 1, 2 are both second arcs, or vice versa.  Two arcs can cross at most once, and $i^{th}$ arcs are noncrossing, so the boundary vertices of arcs 1,2,3,4 are in the same relative position as in the perimeter of Figure \ref{figure:arcs on square}.  (If $v_4$ is a vertex from the original $\EM$-diagram, then in addition neither arc 1 nor arc 3 is the edge in the middle of the $\EM$, since the second edge of an $\EM$ in a resolved $\EM$-diagram crosses nothing.)

By construction the second arcs in an $\EM$-diagram are directed from right to left and first arcs are directed from left to right, regardless of whether the first arc is isolated or not.  Consider just the arcs 1,2,3,4.  Regardless of which endpoint is leftmost on the boundary line, two of the leftmost three of these arcs cross {\em and are directed the same way}.  This contradicts the fact that arcs of the same type (either first or second) are noncrossing.  (If $v_4$ is a vertex from the original $\EM$-diagram, we may also use the fact that arc 1 cannot be the edge in the middle of the $\EM$ containing $v_4$ to obtain a contradiction.)

We conclude that the resolution of an $\EM$-diagram for a three-row Young tableau has no interior face whose boundary has four edges.
\end{proof}

\section{The map from irreducible webs to three-row Young tableaux}

Together, the previous two sections give a map from three-row Young tableaux to irreducible webs for $\mathfrak{sl}_3$.  In this section, we prove that the map is injective.  To do this, we modify a map of Khovanov-Kuperberg that we call the {\em depth map}.  They defined the depth of a face to be the distance from the unbounded face in the planar graph dual to the web; in other words, it is the minimal number of edges crossed by paths between a given face and the unbounded face.  We show that their depth is the same as the depth measured by the number of semicircles in an $\EM$-diagram that contain a face.  We then use this to give an elementary proof that depth inverts the map that sends a standard three-row Young tableau to its resolved $\EM$-diagram.

\subsection{Circle and path depth}\label{subsection: different depths}

Fix a horizontal line $\ell$.  Generalizing $\EM$-diagrams simplifies proofs in this section.

\begin{definition}
A {\em sequence of upper semicircles ${\mathcal C}$} is a set of upper semicircles, each centered on a different point along the line $\ell$.   
\end{definition}

The intersections of the semicircles in ${\mathcal C}$ naturally define a planar graph. 

\begin{definition}
The sequence of upper semicircles ${\mathcal C}$ determines a planar graph $G_{\mathcal C}$ as follows:
\begin{itemize}
\item each point of intersection between two semicircles or between a semicircle and the line $\ell$ is a vertex, and 
\item each arc between vertices is an edge.  
\end{itemize}
\end{definition}

Colloquially, the circle depth of a point on a face of this graph is the number of semicircles containing the point.

\begin{definition}
Let $x$ be a point on a face of the planar graph $G_{\mathcal C}$ determined by the sequence of upper semicircles ${\mathcal C}$.  The {\em circle depth} of $p$, denoted $d^c(x,\mathcal{C})$,
is the number of semicircles above $x$.
\end{definition}

Any planar graph that lies above a horizontal line has another natural definition of depth, which we call path depth.  In the context of spiders, it was defined by Khovanov-Kuperberg \cite{KK99}.

\begin{definition}
Given a planar graph $G$ that lies above a horizontal boundary line, let $f_0$ be the unbounded face above the line.  Let $x$ be any point on the interior of a face of $G$.  The {\em path depth} of $x$, denoted $d^p(x,G)$, is the minimal number of edges crossed by any path from $x$ to $f_0$ that does not cross the boundary.  (The path from $x$ to $f_0$ must cross at the interior of edges and not at vertices.)
\end{definition}

Equivalently the path is an ordinary path in the dual graph to the planar graph $G$.  In particular, the path depth of $x$ is independent of small deformations in the planar representation of $G$, for instance stretching, contracting, or rotating edges.  

The planar graphs that we consider are resolutions of the planar graphs $G_{\mathcal C}$ corresponding to a set of upper semicircles ${\mathcal C}$.  We abuse notation and write $d^p(x,\mathcal{C})$ in this case.

Both circle depth and path depth are constant on each face of a planar graph, since any two points in the same face can be connected by a path that does not cross any edges of the graph.  We will compare depth for different sets ${\mathcal C}$, so it is more convenient to consider depth as a function on points rather than faces.

\subsection{Examples} \label{section: depth examples}

The following examples show circle depth for each $\EM$-diagram and path depth for each web for the standard tableaux of shape $3 \times 2$.  In all cases, the unbounded face has depth zero.  The reader may notice that circle depth for an $\EM$-diagram is quite similar to path depth for the corresponding web; we prove they are the same in the next section.  
\[
\begin{array}{|c|c|}
\cline{1-2}
3 & 6\\
\cline{1-2}
2 & 5\\
\cline{1-2}
1 & 4\\
\cline{1-2}
\end{array} 
\hspace{0.5in} 
\begin{picture}(60, 10)(0, 5)
\put(0,5){\line(1,0){60}}
\put(5,4){\line(0,1){2}}
\put(15,4){\line(0,1){2}}
\put(25,4){\line(0,1){2}}
\put(35,4){\line(0,1){2}}
\put(45,4){\line(0,1){2}}
\put(55,4){\line(0,1){2}}
\put(5,2){\makebox(0,0){$1$}}
\put(15,2){\makebox(0,0){$2$}}
\put(25,2){\makebox(0,0){$3$}}
\put(35,2){\makebox(0,0){$4$}}
\put(45,2){\makebox(0,0){$5$}}
\put(55,2){\makebox(0,0){$6$}}
\put(10,5){\oval(10,10)[t]}
\put(40,5){\oval(10,10)[t]}
\put(20,5){\oval(10,10)[t]}
\put(50,5){\oval(10,10)[t]}
\put(9,6){\small 1}
\put(19,6){\small 1}
\put(39,6){\small 1}
\put(49,6){\small 1}
\end{picture}
\hspace{0.5in} 
\begin{picture}(60, 10)(0, 5)
\put(0,5){\line(1,0){60}}
\put(5,4){\line(0,1){2}}
\put(15,4){\line(0,1){2}}
\put(25,4){\line(0,1){2}}
\put(35,4){\line(0,1){2}}
\put(45,4){\line(0,1){2}}
\put(55,4){\line(0,1){2}}
\put(5,2){\makebox(0,0){$1$}}
\put(15,2){\makebox(0,0){$2$}}
\put(25,2){\makebox(0,0){$3$}}
\put(35,2){\makebox(0,0){$4$}}
\put(45,2){\makebox(0,0){$5$}}
\put(55,2){\makebox(0,0){$6$}}
\put(15,5){\oval(20,10)[t]}
\put(45,5){\oval(20,10)[t]}
\put(15,5){\line(0,1){5}}
\put(45,5){\line(0,1){5}}
\put(15,7){\vector(0,1){2}}
\put(12,10){\vector(1,0){2}}
\put(18,10){\vector(-1,0){2}}
\put(45,7){\vector(0,1){2}}
\put(42,10){\vector(1,0){2}}
\put(48,10){\vector(-1,0){2}}
\put(9,6){\small 1}
\put(19,6){\small 1}
\put(39,6){\small 1}
\put(49,6){\small 1}
\end{picture}
\]
\[\begin{array}{|c|c|}
\cline{1-2}
4 & 6\\
\cline{1-2}
3 & 5\\
\cline{1-2}
1 & 2\\
\cline{1-2}
\end{array}
\hspace{0.5in} 
\begin{picture}(60, 10)(0, 5)
\put(0,5){\line(1,0){60}}
\put(5,4){\line(0,1){2}}
\put(15,4){\line(0,1){2}}
\put(25,4){\line(0,1){2}}
\put(35,4){\line(0,1){2}}
\put(45,4){\line(0,1){2}}
\put(55,4){\line(0,1){2}}
\put(5,2){\makebox(0,0){$1$}}
\put(15,2){\makebox(0,0){$2$}}
\put(25,2){\makebox(0,0){$3$}}
\put(35,2){\makebox(0,0){$4$}}
\put(45,2){\makebox(0,0){$5$}}
\put(55,2){\makebox(0,0){$6$}}
\put(20,5){\oval(10,10)[t]}
\put(30,5){\oval(10,10)[t]}
\put(25,5){\oval(40,15)[t]}
\put(50,5){\oval(10,10)[t]}
\put(19,6){\small 2}
\put(29,6){\small 2}
\put(49,6){\small 1}
\put(39,8){\small 1}
\end{picture}
\hspace{0.5in} 
\begin{picture}(60, 10)(0, 5)
\put(0,5){\line(1,0){60}}
\put(5,4){\line(0,1){2}}
\put(15,4){\line(0,1){2}}
\put(25,4){\line(0,1){2}}
\put(35,4){\line(0,1){2}}
\put(45,4){\line(0,1){2}}
\put(55,4){\line(0,1){2}}
\put(5,2){\makebox(0,0){$1$}}
\put(15,2){\makebox(0,0){$2$}}
\put(25,2){\makebox(0,0){$3$}}
\put(35,2){\makebox(0,0){$4$}}
\put(45,2){\makebox(0,0){$5$}}
\put(55,2){\makebox(0,0){$6$}}
\put(25,5){\oval(20,10)[t]}
\put(25,5){\line(0,1){5}}
\put(30,5){\oval(50,16)[t]}
\put(45,5){\line(0,1){8}}
\put(25,7){\vector(0,1){2}}
\put(22,10){\vector(1,0){2}}
\put(28,10){\vector(-1,0){2}}
\put(45,10){\vector(0,1){2}}
\put(41,13){\vector(1,0){2}}
\put(48,13){\vector(-1,0){2}}
\put(19,6){\small 2}
\put(29,6){\small 2}
\put(49,6){\small 1}
\put(39,8){\small 1}
\end{picture}
\]
\[\begin{array}{|c|c|}
\cline{1-2}
5 & 6\\
\cline{1-2}
2 & 4\\
\cline{1-2}
1 & 3\\
\cline{1-2}
\end{array}
\hspace{0.5in} 
\begin{picture}(60, 10)(0, 5)
\put(0,5){\line(1,0){60}}
\put(5,4){\line(0,1){2}}
\put(15,4){\line(0,1){2}}
\put(25,4){\line(0,1){2}}
\put(35,4){\line(0,1){2}}
\put(45,4){\line(0,1){2}}
\put(55,4){\line(0,1){2}}
\put(5,2){\makebox(0,0){$1$}}
\put(15,2){\makebox(0,0){$2$}}
\put(25,2){\makebox(0,0){$3$}}
\put(35,2){\makebox(0,0){$4$}}
\put(45,2){\makebox(0,0){$5$}}
\put(55,2){\makebox(0,0){$6$}}
\put(30,5){\oval(10,10)[t]}
\put(40,5){\oval(10,10)[t]}
\put(35,5){\oval(40,15)[t]}
\put(10,5){\oval(10,10)[t]}
\put(9,6){\small 1}
\put(29,6){\small 2}
\put(39,6){\small 2}
\put(19,8){\small 1}
\end{picture}
\hspace{0.5in} 
\begin{picture}(60, 10)(0, 5)
\put(0,5){\line(1,0){60}}
\put(5,4){\line(0,1){2}}
\put(15,4){\line(0,1){2}}
\put(25,4){\line(0,1){2}}
\put(35,4){\line(0,1){2}}
\put(45,4){\line(0,1){2}}
\put(55,4){\line(0,1){2}}
\put(5,2){\makebox(0,0){$1$}}
\put(15,2){\makebox(0,0){$2$}}
\put(25,2){\makebox(0,0){$3$}}
\put(35,2){\makebox(0,0){$4$}}
\put(45,2){\makebox(0,0){$5$}}
\put(55,2){\makebox(0,0){$6$}}
\put(35,5){\oval(20,10)[t]}
\put(35,5){\line(0,1){5}}
\put(30,5){\oval(50,16)[t]}
\put(15,5){\line(0,1){8}}
\put(35,7){\vector(0,1){2}}
\put(32,10){\vector(1,0){2}}
\put(38,10){\vector(-1,0){2}}
\put(15,10){\vector(0,1){2}}
\put(12,13){\vector(1,0){2}}
\put(19,13){\vector(-1,0){2}}
\put(9,6){\small 1}
\put(29,6){\small 2}
\put(39,6){\small 2}
\put(19,8){\small 1}
\end{picture}
\]
\[\begin{array}{|c|c|}
\cline{1-2}
5 & 6\\
\cline{1-2}
3 & 4\\
\cline{1-2}
1 & 2\\
\cline{1-2}
\end{array}
\hspace{0.5in} 
\begin{picture}(60, 10)(0, 8)
\put(0,5){\line(1,0){60}}
\put(5,4){\line(0,1){2}}
\put(15,4){\line(0,1){2}}
\put(25,4){\line(0,1){2}}
\put(35,4){\line(0,1){2}}
\put(45,4){\line(0,1){2}}
\put(55,4){\line(0,1){2}}
\put(5,2){\makebox(0,0){$1$}}
\put(15,2){\makebox(0,0){$2$}}
\put(25,2){\makebox(0,0){$3$}}
\put(35,2){\makebox(0,0){$4$}}
\put(45,2){\makebox(0,0){$5$}}
\put(55,2){\makebox(0,0){$6$}}
\put(20,5){\oval(10,10)[t]}
\put(40,5){\oval(10,10)[t]}
\put(40,5){\oval(30,18)[t]}
\put(20,5){\oval(30,15)[t]}
\put(19,6){\small 2}
\put(29,7){\small 2}
\put(39,6){\small 2}
\put(9,8){\small 1}
\put(49,8){\small 1}
\end{picture}
\hspace{0.5in} 
\begin{picture}(60, 15)(0, 8)
\put(0,5){\line(1,0){60}}
\put(5,4){\line(0,1){2}}
\put(15,4){\line(0,1){2}}
\put(25,4){\line(0,1){2}}
\put(35,4){\line(0,1){2}}
\put(45,4){\line(0,1){2}}
\put(55,4){\line(0,1){2}}
\put(5,2){\makebox(0,0){$1$}}
\put(15,2){\makebox(0,0){$2$}}
\put(25,2){\makebox(0,0){$3$}}
\put(35,2){\makebox(0,0){$4$}}
\put(45,2){\makebox(0,0){$5$}}
\put(55,2){\makebox(0,0){$6$}}
\put(20,5){\oval(10,10)[t]}
\put(40,5){\oval(10,10)[t]}
\put(30,10){\oval(20,10)[t]}
\put(30,5){\oval(50,26)[t]}
\put(30,15){\line(0,1){3}}
\put(25,18){\vector(1,0){2}}
\put(35,18){\vector(-1,0){2}}
\put(30,15){\vector(0,1){2}}
\put(27,15){\vector(-1,0){2}}
\put(33,15){\vector(1,0){2}}
\put(15,7){\vector(1,1){2}}
\put(25,7){\vector(-1,1){2}}
\put(35,7){\vector(1,1){2}}
\put(45,7){\vector(-1,1){2}}
\put(19,6){\small 2}
\put(29,9){\small 2}
\put(39,6){\small 2}
\put(9,11){\small 1}
\put(49,11){\small 1}
\end{picture}
\]
\[\begin{array}{|c|c|}
\cline{1-2}
4 & 6\\
\cline{1-2}
2 & 5\\
\cline{1-2}
1 & 3\\
\cline{1-2}
\end{array}
\hspace{0.5in} 
\begin{picture}(60, 20)(0, 6)
\put(0,5){\line(1,0){60}}
\put(5,4){\line(0,1){2}}
\put(15,4){\line(0,1){2}}
\put(25,4){\line(0,1){2}}
\put(35,4){\line(0,1){2}}
\put(45,4){\line(0,1){2}}
\put(55,4){\line(0,1){2}}
\put(5,2){\makebox(0,0){$1$}}
\put(15,2){\makebox(0,0){$2$}}
\put(25,2){\makebox(0,0){$3$}}
\put(35,2){\makebox(0,0){$4$}}
\put(45,2){\makebox(0,0){$5$}}
\put(55,2){\makebox(0,0){$6$}}
\put(10,5){\oval(10,10)[t]}
\put(50,5){\oval(10,10)[t]}
\put(25,5){\oval(20,17)[t]}
\put(35,5){\oval(20,15)[t]}
\put(19,8){\small 1}
\put(29,7){\small 2}
\put(39,8){\small 1}
\put(9,6){\small 1}
\put(49,6){\small 1}
\end{picture}
\hspace{0.5in} 
\begin{picture}(60, 20)(0, 6)
\put(0,5){\line(1,0){60}}
\put(5,4){\line(0,1){2}}
\put(15,4){\line(0,1){2}}
\put(25,4){\line(0,1){2}}
\put(35,4){\line(0,1){2}}
\put(45,4){\line(0,1){2}}
\put(55,4){\line(0,1){2}}
\put(5,2){\makebox(0,0){$1$}}
\put(15,2){\makebox(0,0){$2$}}
\put(25,2){\makebox(0,0){$3$}}
\put(35,2){\makebox(0,0){$4$}}
\put(45,2){\makebox(0,0){$5$}}
\put(55,2){\makebox(0,0){$6$}}
\put(10,5){\oval(10,10)[t]}
\put(30,5){\oval(10,10)[t]}
\put(50,5){\oval(10,10)[t]}
\put(30,10){\oval(40,16)[t]}
\put(30,10){\line(0,1){8}}
\put(5,7){\vector(1,1){2}}
\put(15,7){\vector(-1,1){2}}
\put(25,7){\vector(1,1){2}}
\put(35,7){\vector(-1,1){2}}
\put(45,7){\vector(1,1){2}}
\put(55,7){\vector(-1,1){2}}
\put(25,18){\vector(-1,0){2}}
\put(35,18){\vector(1,0){2}}
\put(30,15){\vector(0,-1){2}}
\put(19,11){\small 1}
\put(29,6){\small 2}
\put(39,11){\small 1}
\put(9,6){\small 1}
\put(49,6){\small 1}
\end{picture}
\]
Comparing the faces immediately to the left and right of a boundary vertex, the reader may also notice that depth increases if the boundary vertex is the first vertex of an $\EM$, stays the same if the vertex is the second vertex of an $\EM$, and decreases if the vertex is the third vertex of an $\EM$.  We prove this in the next section as well.  The reverse of this process constructs a three-row tableau from a web whose boundary vertices are all sources: if depth increases at a boundary vertex $i$, put $i$ on the bottom row of a tableau; if depth stays the same at $i$, put $i$ on the middle row; and if depth decreases, put $i$ on the top row of the tableau.  We show at the end of this section that this is the inverse of the map from Young tableaux to webs that we just defined.

\subsection{Analyzing path depth and circle depth in $\EM$-diagrams}
The number and relative position of faces are the same in a planar graph and its resolution.  Hence we may compare circle depth of a graph obtained from upper semicircles and path depth of its resolution.  When we do, we find that circle depth and path depth agree.  We will also see that depth is closely related to $\EM$s in the $\EM$-diagram.  The proofs in this section only use the undirected graph underlying each web.

\begin{lemma} \label{lemma: circle depth is path depth}
Let ${\mathcal C}$ be a sequence of upper semicircles such that at most two semicircles intersect at each given point.  Let $G_{\mathcal C}$ be the planar graph determined by ${\mathcal C}$ and let $G_{\mathcal C}^r$ be the resolution of $G_{\mathcal C}$.  If $x$ is a point on a face of both $G_{\mathcal C}$ and $G_{\mathcal C}^r$ then $d^c(x, {\mathcal C}) = d^p(x, G_{\mathcal C}^r)$.  
\end{lemma}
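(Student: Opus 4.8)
The plan is to establish the two inequalities $d^p(x,G_{\mathcal C}^r)\le d^c(x,\mathcal C)$ and $d^p(x,G_{\mathcal C}^r)\ge d^c(x,\mathcal C)$ separately. Throughout I would lean on the observation recorded just before the lemma: resolving a four-valent vertex changes neither the faces nor their adjacencies, except for the single new connecting edge. Thus a point $x$ lies in corresponding faces of $G_{\mathcal C}$ and $G_{\mathcal C}^r$, and its circle depth is computed the same way in both. Since both depths are constant on faces, I may replace $x$ by any convenient point of its face; in particular I may assume the vertical line through $x$ avoids every vertex of $G_{\mathcal C}^r$ and every intersection point of ${\mathcal C}$.

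For the upper bound I would exhibit an explicit path: shoot a ray straight up from $x$ to $+\infty$, which ends in the unbounded face $f_0$ and never meets $\ell$. Such a ray meets an upper semicircle $C\in{\mathcal C}$ exactly once when the first coordinate of $x$ lies strictly between the endpoints of $C$ and $x$ lies below the arc of $C$, and not at all otherwise; these are precisely the semicircles counted by $d^c(x,\mathcal C)$. Because the first coordinate was chosen generic, each crossing occurs away from the neighborhoods where resolution altered the diagram, so each is a transverse crossing of an honest edge of $G_{\mathcal C}^r$ lying on a single semicircle. Hence this path crosses exactly $d^c(x,\mathcal C)$ edges, giving $d^p(x,G_{\mathcal C}^r)\le d^c(x,\mathcal C)$.

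The lower bound I would obtain from a discrete intermediate-value principle: each edge of $G_{\mathcal C}^r$, when crossed, changes circle depth by at most one. An edge lying on a semicircle $C$ separates the region below the arc of $C$ from the region above it and lies on no other semicircle (the hypothesis that at most two semicircles meet at a point guarantees each such edge is a clean arc of a single $C$), so crossing it changes the number of semicircles above the point by exactly $\pm 1$. The only other edges are the connecting edges introduced by resolution, and the crux is that each such edge separates two faces of \emph{equal} circle depth, so that crossing it changes $d^c$ by $0$. Granting this, any path from $x$ to $f_0$ must cross at least $d^c(x,\mathcal C)$ edges, since circle depth starts at $d^c(x,\mathcal C)$, ends at $0$ in $f_0$, and changes by at most one per edge. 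Combined with the upper bound, this proves the lemma.

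I expect the claim about connecting edges to be the main obstacle, and it is where the geometry of upper semicircles enters. At a crossing of two upper semicircles the four local sectors carry a forced circle-depth pattern: the bottom sector lies below both caps (depth $d_0+2$), the top sector below neither (depth $d_0$), and the two side sectors below exactly one each (depth $d_0+1$). I would verify, from the local picture in Figure \ref{trivalizing vertices} together with the edge directions assigned to an $\EM$-diagram, that the connecting edge created at such a crossing always separates the two side sectors; equivalently, that the two half-edges meeting at each new trivalent vertex form a top pair or a bottom pair, never a left--right pair. Concretely I would check this against Corollary \ref{corollary:two ms relative}: a crossing always consists of a first arc meeting the second arc of another $\EM$, where both arcs of an $\EM$ are directed away from their boundary endpoints toward the shared trivalent vertex, so that first arcs run left-to-right and second arcs run right-to-left. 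Because the two crossing arcs are then oppositely directed, the two half-edges pointing into the crossing emanate from the same side (both from below or both from above), which forces the connecting edge to run transverse to $\ell$ and hence to separate the two equal-depth side faces. This local verification is elementary but must be done carefully, since the opposite smoothing would join faces whose depths differ by two and would make the lemma false.
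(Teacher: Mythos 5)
Your argument is correct in outline and takes a genuinely different route from the paper. The paper proves the lemma by induction on the number of semicircles: it shows that a ``vertical path'' (a vertical ray with small detours around resolved crossings) is always a minimal path, adding one semicircle $C$ at a time and using the fact that $C$ together with $\ell$ bounds a closed region that any escape path must exit. Your proof replaces the induction by a local monotonicity argument: the vertical ray gives $d^p\le d^c$, and the observation that every edge crossing changes circle depth by at most one gives $d^p\ge d^c$. What your version buys is that it isolates exactly where the choice of resolution matters --- the claim that the connecting edge separates two faces of equal circle depth --- whereas in the paper this issue is absorbed (somewhat silently) into the Jordan-curve step of the induction. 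You are also right that this is the crux and that the opposite smoothing would falsify the lemma.

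However, the specific justification you offer for that crux contains a false sub-claim. You assert that, because the two crossing arcs are oppositely directed, ``the two half-edges pointing into the crossing emanate from the same side (both from below or both from above).'' This is not true in general: for linked semicircles $(i,j)$ and $(i',j')$ with $i<i'<j<j'$, the slopes of the two arcs at the intersection point need not have opposite signs (both arcs can be ascending there, for instance when $(i',j')$ is much wider than $(i,j)$), and in that case one in-edge points into the upper half-plane and the other into the lower half-plane. The conclusion you want is nevertheless true, and for a cleaner reason: since one arc is directed left-to-right and the other right-to-left, the two in-edges always consist of one \emph{left} half-edge and one \emph{right} half-edge (one of each arc). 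Among the four sectors at a crossing of two upper semicircles, the two bounded by a left half-edge of one arc and a right half-edge of the other are precisely the top sector (above both arcs) and the bottom sector (below both); the two side sectors, which are the ones of equal depth $d_0+1$, are each bounded by two half-edges on the same side. Hence $v_1$ and $v_2$ are always displaced into the top and bottom sectors, the connecting edge runs between the two side sectors, and crossing it changes circle depth by $0$, as you need. With that substitution (and the analogous, easier check that the stem of each $Y$ separates the faces under the two arcs of an $\EM$, which also have equal depth), your proof goes through.
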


\begin{proof}
Path depth is well-defined on planar graphs, and in particular is independent of the angle or (nonzero) length of its edges.  We assume without loss of generality that the resolution producing $G_{\mathcal C}^r$ adds a very small vertical edge at each arc crossing.  

We induct on the number of semicircles in ${\mathcal C}$.  The induction hypothesis is a slightly stronger claim: any path that is a vertical line from $x$ to $f_0$, except for a very small semicircle around any resolved arc crossing, crosses the minimum number of edges possible between $x$ and $f_0$.  (The semicircle is sufficiently small if it stays inside of the local neighborhood depicted in Figure \ref{trivalizing vertices}.)  We call these paths {\em vertical paths} from $x$ to $f_0$.

When $|{\mathcal C}|$ is zero or one, the claim is trivially true.  Assume that the claim holds when $|{\mathcal C}| = n-1$.  Choose an upper semicircle $C \not \in \mathcal{C}$ satisfying the hypothesis of the lemma, and let $\mathcal{C}' = \mathcal{C} \cup \{C\}$.  

For each point $x$ on a face in $G_{{\mathcal C}'}$, we know that $d^c(x, \mathcal{C}') = d^c(x, \mathcal{C})$ if $x$ is not below $C$ and $d^c(x, \mathcal{C}') = d^c(x, \mathcal{C})+1$ if $x$ is below $C$. 

Consider any vertical path from the point $x$ to $f_0$ in $G_{\mathcal C}^r$.  If $x$ is below $C$ then each vertical path crosses $C$ exactly once, so $d^p(x,\mathcal{C}') \leq d^c(x,\mathcal{C})+1$.  At the same time, any path from $x$ to $f_0$ crosses $C$ at least once, since $C$ together with the boundary line form a closed curve.  Hence $d^p(x,\mathcal{C}') \geq d^c(x,\mathcal{C})+1$.  We conclude that if $x$ is below $C$ then $d^p(x,\mathcal{C}') = d^c(x,\mathcal{C})+1$ and each vertical path from $x$ to $f_0$ is a minimal-length path.  

Similarly, if $x$ is not below $C$, then any vertical path from $x$ to $f_0$ crosses only edges obtained from $\mathcal{C}$.  So $d^p(x,\mathcal{C}') \leq d^p(x,\mathcal{C})$.  No path from $x$ to $f_0$ in the graph associated to $\mathcal{C}'$ can cross fewer edges, else the same path can be considered in the graph associated to $\mathcal{C}$, where it contradicts the assumption on $d^p(x,\mathcal{C})$.  So $d^p(x,\mathcal{C}') = d^p(x,\mathcal{C})$.

This proves the claim.
\end{proof}

Let $(i,j,k)$ be the boundary vertices of an arbitrary $\EM$ in an $\EM$-diagram.  Colloquially, depth decreases at $i$, stays the same at $j$, and increases at $k$.   The following result makes this precise.

\begin{corollary} \label{corollary:depth map on ms}
Fix a resolved three-row $\EM$-diagram.  For each boundary vertex $i$ of a web, let $i-\epsilon$ denote a point on the face to the left of $i$ and $i+\epsilon$ denote a point on the face to the right of $i$.
\begin{itemize}
\item If $i$ is the first boundary vertex of an $\EM$ or an isolated arc, then $d(i+\epsilon) - d(i-\epsilon) = 1$.
\item If $j$ is the second boundary vertex of an $\EM$, then $d(j+\epsilon) - d(j-\epsilon) = 0$.
\item If $k$ is the last boundary vertex of an $\EM$ or an isolated arc, then $d(k+\epsilon) - d(k-\epsilon) = -1$.
\end{itemize}
\end{corollary}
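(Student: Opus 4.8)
The plan is to work entirely with circle depth, which by Lemma \ref{lemma: circle depth is path depth} agrees with path depth on the resolved diagram; the hypothesis of that lemma, that at most two semicircles meet at any point, is exactly Corollary \ref{corollary:two ms relative}(1). Since circle depth is constant on each face, and both faces labelled $i-\epsilon$ and $i+\epsilon$ abut the boundary line, I may compute $d(i\pm\epsilon)$ using points pressed against the boundary. The first step is to record how circle depth looks there: if $x$ lies just above the boundary at horizontal coordinate $p$, then a semicircular arc with endpoints $a<b$ lies above $x$ precisely when $a<p<b$, because so close to the boundary the only way to sit beneath an arc is to lie horizontally strictly between its two endpoints. (The fact that the faces near the boundary, and hence this count, are unchanged by resolving the interior crossings is exactly the remark that a planar graph and its resolution have the same faces in the same relative positions.)

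Second, I would compare the two near-boundary points flanking a fixed boundary vertex $i$. As $x$ moves from $i-\epsilon$ to $i+\epsilon$, an arc that spans entirely over $i$ (an endpoint on each side) lies above both points and contributes nothing; an arc with left endpoint exactly $i$ lies above $i+\epsilon$ but not $i-\epsilon$, contributing $+1$; and an arc with right endpoint exactly $i$ lies above $i-\epsilon$ but not $i+\epsilon$, contributing $-1$. Summing these contributions gives
\[ d(i+\epsilon) - d(i-\epsilon) = (\text{number of arcs with left endpoint } i) - (\text{number of arcs with right endpoint } i). \]

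Third, I would read off the incidences directly from the definitions of $\EM$ and isolated arc, using only that for an arc $(a,b)$ with $a<b$ the smaller vertex $a$ is its left endpoint and the larger vertex $b$ its right endpoint. For an $\EM$ $(i,j,k)$ with $i<j<k$, the only incident arcs are $(i,j)$ and $(j,k)$: vertex $i$ meets only $(i,j)$, as a left endpoint, giving difference $+1$; vertex $j$ is the right endpoint of $(i,j)$ and the left endpoint of $(j,k)$, giving difference $0$; vertex $k$ meets only $(j,k)$, as a right endpoint, giving difference $-1$. For an isolated arc $(i,j)$, vertex $i$ is the left endpoint of its sole incident arc (difference $+1$) and vertex $j$ is its right endpoint (difference $-1$). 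These are precisely the three asserted values, and by Corollary \ref{corollary:two ms relative} every arc of a three-row diagram is a first or second arc, so every boundary vertex falls into exactly one of these cases.

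I expect the only delicate point to be the bookkeeping in the first two steps: verifying that for a point squeezed against the boundary the arcs lying above it are exactly those whose horizontal span contains it, and confirming that resolving the interior four-valent vertices does not disturb the near-boundary faces on which the count is performed. Once that local picture is pinned down, the remainder is a finite case check whose input is entirely the combinatorial definitions of $\EM$ and isolated arc together with the numerical ordering of their vertices.
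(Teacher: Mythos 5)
Your proof is correct, and it arrives at the same three local computations as the paper's, but by a more direct route. The paper argues by induction on the number of $\EM$s and isolated arcs: it deletes one $\EM$ (or isolated arc), observes that the faces to the left and right of its boundary vertices merge in the smaller diagram and so have equal circle depth there, and then restores the $\EM$ to see how those depths separate. You instead note that for a point pressed against the boundary line at horizontal position $p$, circle depth is by definition the number of arcs whose endpoints $a<b$ satisfy $a<p<b$, so the difference across a boundary vertex is simply the number of incident arcs having that vertex as a left endpoint minus the number having it as a right endpoint; the three cases then follow immediately from the definitions of an $\EM$ and an isolated arc. This eliminates the induction entirely and isolates the clean identity $d(i+\epsilon)-d(i-\epsilon)=(\textup{arcs with left endpoint } i)-(\textup{arcs with right endpoint } i)$, which is arguably tidier and no less rigorous. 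The point you flag as delicate --- that resolving does not disturb the near-boundary faces on which the count is performed --- is handled identically in both arguments, via Lemma \ref{lemma: circle depth is path depth} together with the paper's standing observation that a planar graph and its resolution have the same faces in the same relative positions; your use of Corollary \ref{corollary:two ms relative} to verify the hypothesis of that lemma is exactly right.
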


\begin{proof}
Given a resolved $\EM$-diagram $M$, let $F_M$ be the set 
\[F_M = \left\{(i,j,k) \textup{ is an $\EM$ in the web} \right\} \cup \left\{(i,k) \textup{ is an isolated arc}\right\}.\]  
We use induction on the cardinality of the set $F_M$.  When $|F_M|=0$ the claim is vacuously true.  Assume that it holds when $|F_M|=n-1$. Let $M$ be an $\EM$-diagram with $|F_M|=n$.  Suppose either $(i,j,k)$ are the boundary vertices of an $\EM$ in $M$ or $(i,k)$ are the boundary vertices of an isolated arc.  Let $M'$ be the $\EM$-diagram with $|F_{M'}|=n-1$ obtained by erasing the $\EM$ with boundary vertices $(i,j,k)$, respectively the arc $(i,k)$.  The faces to the right and left of each boundary vertex $i,j,k$ merge in $M'$, so points on these faces have the same circle depth in $M'$.  Comparing to $M$, we see:
\begin{itemize}
\item the face to the left of $i$ has depth one less than the face to the right of $i$, since the face to the right of $i$ is under an arc (either $(i,j)$ or $(i,k)$) while the face to the left is unchanged from $M'$;
\item the face to the left of $j$ has the same depth as the face to the right of $j$, since the former is under the arc $(i,j)$ while the latter is under the arc $(j,k)$; and 
\item the face to the left of $k$ has depth one greater than the face to the right of $k$, since the former is under an arc (either $(j,k)$ or $(i,k)$) while the latter is unchanged from $M'$.  
\end{itemize}
If $i'$ is any vertex with $i'<i$ or $i'>k$ then the faces left and right of $i'$ are under the same arcs in $M'$ as in $M$.  If $i'$ is any vertex with $i<i'<k$ and $i' \neq j$ then the faces left and right of $i'$ are under exactly one more arc in $M$ than in $M'$.  In all cases, the claim holds.
\end{proof}

\subsection{Khovanov-Kuperberg's depth map}

We now define a map from irreducible webs for $\mathfrak{sl}_3$ {\em whose boundary vertices are all sources} to standard Young tableaux of size $3 \times n$.  The results described below were originally proven by Khovanov-Kuperberg \cite{KK99}.  We use the exposition of Petersen-Pylyavskyy-Rhoades \cite{PPR09}.  As before $i+\epsilon$ is any point on the face immediately to the right of the boundary vertex $i$, and $i-\epsilon$ is any point on the face to the left of $i$.

\begin{definition}
Given an irreducible web for $\mathfrak{sl}_3$ whose boundary vertices are sources, the {\em depth map} creates a corresponding Young tableau by inserting each boundary vertex $i$ as follows:
\[\textup{ Put } i \textup{ on the } \begin{array}{l} \textup{ top }\\ \textup{ middle } \\ \textup{ bottom } \end{array} \textup{ row of the Young tableau if } d(i+\epsilon)-d(i-\epsilon) = \begin{array}{r} -1 \\ 0 \\ 1 \end{array}\]
\end{definition}

It is not a priori clear that this map is well-defined, nor that the resulting Young tableaux have shape $3 \times n$, but in fact both statements are true.

\begin{proposition} \label{proposition:depth map well-defined}
{\bf (Khovanov-Kuperberg Lemmas 1-3 and Proposition 1, as described in Petersen-Pylyavskyy-Rhoades Theorem 2.4)} The depth map is a well-defined map from irreducible webs for $\mathfrak{sl}_3$ with all boundary vertices sources to standard Young tableaux of size $3 \times n$.
\end{proposition}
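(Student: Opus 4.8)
The plan is to prove well-definedness first, then read the depths along the boundary as a single lattice walk and invoke non-ellipticity to pin down the shape of the resulting filling.

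First I would dispatch well-definedness. Each boundary vertex $i$ has degree one, so a single edge $e_i$ separates the face containing $i-\epsilon$ from the face containing $i+\epsilon$. Since these two faces share $e_i$, a minimal path to $f_0$ from one of them extends to the other simply by crossing $e_i$; because path depth is distance to $f_0$ in the dual graph, it follows that $d(i+\epsilon)-d(i-\epsilon)\in\{-1,0,1\}$. Hence each $i$ is assigned to exactly one row, and the map is defined on every boundary vertex.

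Next I would record the boundary walk. Write $d_j = d(j+\epsilon) = d((j+1)-\epsilon)$, with $d_0 = d(1-\epsilon)$ and $d_N = d(N+\epsilon)$. The faces immediately to the left of vertex $1$ and to the right of vertex $N$ are part of the unbounded face $f_0$, so $d_0 = d_N = 0$; and $d_j \ge 0$ for all $j$ because path depth is a nonnegative count of crossed edges that vanishes only on $f_0$. Telescoping the increments, $d_j$ equals the number of bottom-row entries among $1,\dots,j$ minus the number of top-row entries among $1,\dots,j$. Thus $d_j \ge 0$ says that every prefix has at least as many bottom-row as top-row entries, and $d_N=0$ gives $\#\text{bottom} = \#\text{top}$ overall.

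The hard part — and the main obstacle — will be upgrading this single ballot inequality to the two independent inequalities $\#\text{bottom}\ge\#\text{middle}\ge\#\text{top}$ on every prefix, together with $\#\text{bottom}=\#\text{middle}=\#\text{top}=N/3$, which is exactly what makes the filling a standard tableau of shape $3\times n$. A scalar depth cannot detect the middle-row count directly, so here I would either (a) refine $d^p$ into a two-coordinate weight-lattice walk, so that the all-source boundary condition forces a closed dominant walk in the $A_2$ weight lattice, whose dominant lattice words are precisely $3\times n$ tableaux; or (b) argue by contradiction, taking the leftmost boundary vertex at which a lattice inequality first fails, following two minimal-depth paths that bound the offending region, and showing they would enclose an interior face with at most four edges, contradicting the non-elliptic hypothesis. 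Approach (b) is essentially Khovanov--Kuperberg's Lemmas 1--3 and Proposition 1, which we may invoke. I expect the non-ellipticity input to be unavoidable: $d^p\ge 0$ alone permits a bottom step immediately followed by a top step, and it is exactly reducedness that excludes the corresponding small faces and forces a genuine lattice word. Once the row-word is known to be a lattice word with equal letter counts $N/3$, inserting each $i$ into the next empty column of its assigned row produces rows and columns that increase automatically, yielding a standard Young tableau of shape $3\times n$ and completing the proof.
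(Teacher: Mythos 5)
The paper offers no proof of this proposition at all: it is imported verbatim from Khovanov--Kuperberg (their Lemmas 1--3 and Proposition 1), via the exposition in Petersen--Pylyavskyy--Rhoades, and the attribution in the statement is the entirety of the paper's justification. So there is no internal argument to compare yours against. That said, your sketch is a faithful reconstruction of what the cited proof does, and the parts you carry out in full are correct: the increment $d(i+\epsilon)-d(i-\epsilon)\in\{-1,0,1\}$ follows exactly as you say from the fact that the two faces flanking a degree-one boundary vertex are adjacent in the dual graph, the identification $d_0=d_N=0$ and $d_j\ge 0$ is right, and the final step (a ballot word in three letters with equal counts yields a standard $3\times n$ tableau, with column-strictness coming from the prefix inequalities) is standard and correctly stated.

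You also correctly locate the one genuinely hard point: a single scalar depth only controls $\#\mathrm{bottom}-\#\mathrm{top}$, not the middle-row count, and upgrading to the full dominance condition $\#\mathrm{bottom}\ge\#\mathrm{middle}\ge\#\mathrm{top}$ on prefixes is precisely where non-ellipticity enters. Your option (a) -- refining the scalar depth to a walk in the dominant chamber of the $\mathfrak{sl}_3$ weight lattice -- is in fact how Khovanov--Kuperberg set it up, so deferring that step to their Lemmas 1--3 is exactly the move the paper itself makes. As a standalone proof your writeup is incomplete (the crux is only outlined in two alternative directions rather than executed), but as a proof of an explicitly attributed proposition it is at least as detailed as the paper's, and more informative about where the content lies.
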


We see immediately that the depth map is the inverse of the map that sends a Young tableau of shape $(n,n,n)$ to its resolved $\EM$-diagram.

\begin{theorem}\label{theorem: bijection}
The depth map is the inverse of the map from standard Young tableaux of size $3 \times n$ to irreducible webs for $\mathfrak{sl}_3$ with boundary vertices all sources obtained by taking resolved $\EM$-diagrams.  Both maps are bijections.
\end{theorem}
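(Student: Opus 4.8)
The plan is to show directly that the composite $\Psi \circ \Phi$ is the identity on standard Young tableaux of shape $3 \times n$, where $\Phi$ denotes the map sending a tableau to the web obtained from its resolved $\EM$-diagram and $\Psi$ denotes the depth map; the bijection statement then follows formally.

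First I would pin down the combinatorics of the $\EM$-diagram of a $3 \times n$ tableau $T$. Because each of the three rows contains exactly $n$ entries, the first arcs give a perfect matching between the bottom-row and middle-row entries, and the second arcs give a perfect matching between the middle-row and top-row entries. Consequently every middle-row entry $j$ is simultaneously the upper endpoint of a first arc $(i,j)$ and the lower endpoint of a second arc $(j,k)$, so the diagram decomposes into exactly $n$ $\EM$s $(i,j,k)$ with $i$ on the bottom row, $j$ on the middle row, and $k$ on the top row, and no isolated arcs occur.

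Next I would read off the depth map. The hypothesis of Lemma \ref{lemma: circle depth is path depth} holds by Corollary \ref{corollary:two ms relative}, so the path depth used to define $\Psi$ agrees with circle depth in the $\EM$-diagram. Corollary \ref{corollary:depth map on ms} then says that $d(i+\epsilon)-d(i-\epsilon)$ equals $+1$ at the first vertex of each $\EM$, $0$ at the second, and $-1$ at the third. By the definition of the depth map this places the first vertex on the bottom row, the second on the middle row, and the third on the top row—exactly matching the first step. Since the entries in any row of a standard tableau are determined by which row they occupy (they increase left to right), this yields $\Psi(\Phi(T)) = T$.

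Finally I would deduce the bijection. The identity $\Psi \circ \Phi = \mathrm{id}$ makes $\Phi$ injective. Khovanov-Kuperberg proved that the depth map $\Psi$ is itself a bijection from irreducible webs with all boundary vertices sources to standard Young tableaux of shape $3 \times n$; granting this, $\Psi \circ \Phi = \mathrm{id}$ forces $\Phi = \Psi^{-1}$, so both maps are bijections and are mutually inverse. (Alternatively one may avoid invoking the full Khovanov-Kuperberg bijection and instead observe that $\Phi$ is an injection between two finite sets of equal cardinality, the equality of cardinalities being their counting result.) I expect the only delicate point to be the bookkeeping in the first step—verifying that no isolated arcs arise and that the three vertices of each $\EM$ lie on the bottom, middle, and top rows respectively—after which the theorem is essentially immediate from Corollary \ref{corollary:depth map on ms}.
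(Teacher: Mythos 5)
Your proposal is correct and follows essentially the same route as the paper: the paper's proof likewise applies Corollary \ref{corollary:depth map on ms} to conclude that the depth map sends the first, second, and third vertex of each $\EM$ to the bottom, middle, and top rows respectively, recovering the original tableau, and then concludes bijectivity from Kuperberg's count that the two finite sets have equal cardinality --- which is exactly the parenthetical alternative you offer. Your preliminary bookkeeping (that a $3\times n$ tableau yields $n$ $\EM$s with no isolated arcs, and that path depth agrees with circle depth via Lemma \ref{lemma: circle depth is path depth}) is left implicit in the paper's proof but is a correct and welcome explication.
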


\begin{proof}
By Corollary \ref{corollary:depth map on ms}, the depth map sends the first vertex of each $\EM$ to the bottom row of the Young tableau, the second vertex to the middle row, and the third vertex to the top row.  By construction of $\EM$-diagrams, this is the original Young tableau.  Kuperberg proved in  \cite[Theorem 6.1]{Kup96} that the set of irreducible webs for $\mathfrak{sl}_3$ with $3n$ boundary vertices (all sources) has the same cardinality as the set of standard Young tableaux of shape $3 \times n$.  So the claim holds.
\end{proof}

We can extend the depth map to include some irreducible webs for $\mathfrak{sl}_3$ with boundary vertices that are sinks.

\begin{definition}
Given an irreducible web for $\mathfrak{sl}_3$, the {\em extended depth map} creates a corresponding three-row Young tableau by inserting each boundary vertex $i$ as follows:
\[\textup{ Put } i \textup{ on the } \begin{array}{l} \textup{ top }\\ \textup{ middle } \\ \textup{ middle } \\ \textup{ bottom } \end{array} \textup{ row if } d(i+\epsilon)-d(i-\epsilon) = \begin{array}{r} - \\ - \\  \\  \\ \end{array}\hspace{-1em} \begin{array}{l} 1 \textup{ and $i$ is a source} \\ 1 \textup{ and $i$ is a sink} \\ 0 \\ 1 \end{array}\]
\end{definition}

The domain of the extended depth map includes irreducible webs whose boundary vertices are both sources and sinks.  The extended depth map coincides with the ordinary depth map for an irreducible web with no sinks on its boundary.  We can generalize the previous corollary as well.

\begin{proposition}
Fix $n \leq k$.  The extended depth map is well-defined on irreducible webs for $\mathfrak{sl}_3$ that are resolved $\EM$-diagrams for standard Young tableaux of shape $(n,k,k)$.  For those webs, the extended depth map is the inverse of the map that takes a three-row Young tableau to its resolved $\EM$-diagram.
\end{proposition}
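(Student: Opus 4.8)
The plan is to follow the proof of Theorem~\ref{theorem: bijection} almost verbatim, since Corollary~\ref{corollary:depth map on ms} already records the depth change at the boundary vertices of \emph{both} $\EM$s and isolated arcs. The only genuinely new ingredient is bookkeeping the source/sink type of each boundary vertex, which is precisely the data the extended depth map uses to distinguish the two situations in which the depth decreases.

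First I would describe the resolved $\EM$-diagram of a standard tableau $T$ of shape $(n,k,k)$ explicitly. By the well-definedness lemma, each of the $k$ middle-row entries is joined by a first arc to a bottom-row entry, and each of the $n$ top-row entries is joined by a second arc to a middle-row entry. The ``not already on an arc'' clause in the construction forces the first arcs to form a perfect matching between middle-row and bottom-row entries, and forces the $n$ second arcs to attach to $n$ distinct middle-row entries. Hence $T$ yields exactly $n$ full $\EM$s (each with a bottom, middle, and top vertex) together with $k-n$ isolated arcs, each joining a bottom-row entry to one of the $k-n$ middle-row entries that receives no second arc.

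Next I would read off the orientations from the directing conventions of Section~\ref{section: resolved m definition}. In each $\EM$ all three edges point toward the interior trivalent vertex, so that vertex is a sink and the boundary vertices $i,j,k$ are sources; in each isolated arc the edge is directed from its bottom-row endpoint to its middle-row endpoint, so the bottom endpoint is a source and the middle endpoint is a sink. Since crossings and their resolutions (Lemma~\ref{lemma:trivalization}) do not touch the boundary, this classification is complete: every boundary vertex is a source \emph{except} the middle-row endpoint of each isolated arc, which is a sink.

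Finally I would combine these facts with Corollary~\ref{corollary:depth map on ms}. At the first vertex of an $\EM$ or isolated arc the depth rises by $1$, so the extended depth map places it on the bottom row; at the second vertex of an $\EM$ the depth is unchanged, placing it on the middle row; at the last vertex of an $\EM$ the depth drops by $1$ and the vertex is a source, placing it on the top row; and at the middle-row endpoint of an isolated arc the depth drops by $1$ but the vertex is a sink, so the extended depth map places it on the middle row. In every case this is the row of $T$ containing that vertex, so the extended depth map returns $T$, which simultaneously shows that the map is well-defined on this class (its output is the standard tableau $T$ of shape $(n,k,k)$) and that it inverts the passage from tableaux to resolved $\EM$-diagrams. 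The step I expect to require the most care is the orientation bookkeeping of the third paragraph: one must verify that boundary sinks occur \emph{exactly} at the middle endpoints of isolated arcs, since it is this single extra case---depth decreasing at a sink rather than a source---that the ``middle row'' clause of the extended depth map is designed to catch. No new depth analysis is needed, as all depth information is already furnished by Corollary~\ref{corollary:depth map on ms}.
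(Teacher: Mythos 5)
Your proposal is correct and follows the same route as the paper: the paper's own proof is a one-line appeal to Corollary \ref{corollary:depth map on ms} together with the orientation conventions for resolving $\EM$-diagrams (in particular, that isolated arcs are directed from their bottom-row endpoint to their middle-row endpoint, making the latter the only boundary sinks), which is precisely the argument you spell out in detail. Your expanded bookkeeping of sources versus sinks is exactly the content the paper leaves implicit.
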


The $\EM$-diagram for a tableau of shape $(n,k,k)$ has $k-n$ isolated arcs and $n$ $\EM$s, so its resolution has $3n+(k-n)$ sources and $(k-n)$ sinks on the boundary line.  When $k=n$, the boundary has no sinks and this proposition reduces to the previous corollary.

\begin{proof}
Corollary \ref{corollary:depth map on ms} together with our conventions for resolving an $\EM$-diagram show that the extended depth map is well-defined on resolved $\EM$-diagrams, and that it inverts the map from Young tableaux to resolved $\EM$-diagrams.  
\end{proof}

Unlike standard Young tableaux of shape $(n,k,k)$, arbitrary three-row standard Young tableaux have isolated boundary vertices.  Webs with isolated vertices are not irreducible (though we could extend the depth map to these webs as well).

\section{Applications}
A combinatorial spider has several natural graph-theoretic operations that correspond to essential algebraic operations on the corresponding representations: {\em rotation} of a web, {\em join} of two webs (which inserts one web into another), and {\em stitch} of a web (which connects two strands of a web).  In what follows, we show natural operations on $3 \times n$ Young tableaux that correspond to rotation and join of webs; the proofs use resolved $\EM$-diagrams, and are short and geometrically intuitive.  The operations we describe apply to all tableaux, but only correspond to operations on webs for $3 \times n$ tableaux (as we will discuss).

\subsection{Promotion and rotation}
Jeu de taquin is a classical operation on Young tableaux in which an empty box percolates to the boundary of a tableau.  In a single step on the configuration
\[\begin{array}{|c|c|}
\cline{1-1} a & \multicolumn{1}{c}{$ $} \\
\cline{1-2} & b \\
\hline
\end{array}\]
the number $a$ slides down if $a<b$ and the number $b$ slides left if $b<a$.  (Numbers outside of the tableau are considered to be $\infty$.)  Jeu-de-taquin promotion is the operation on standard tableaux obtained by
\begin{itemize}
\item erasing $1$, 
\item performing jeu-de-taquin slides until a new Young tableaux is obtained, 
\item and then adding $n$ to the newly-empty spot.  
\end{itemize}
Petersen-Pylyavskyy-Rhoades recently proved that jeu-de-taquin promotion on $3 \times n$ standard tableaux corresponds to rotation of webs \cite{PPR09}.  This was a key step in studying a cyclic sieving phenomenon, to analyze the orbits of the permutation action on tableaux obtained by promotion.  

Jeu de taquin has a natural interpretation in terms of arcs in an $\EM$-diagram.

\begin{lemma} \label{lemma:jeu de taquin slides}
Fix a standard Young tableau of arbitrary shape.  Choose $b_i$ from a row whose entries are $b_1, b_2, \ldots$ and suppose that $t_1, t_2, \ldots $ are the entries in the row above $b_i$.  After removing $b_i$ and performing jeu de taquin, the number $t_k$ slides down if and only if $t_k$ is the largest number on its row that forms an arc with a number $b_{j_0} \leq b_i$ and $k \geq i$.  
\end{lemma}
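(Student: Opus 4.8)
The plan is to reduce the statement to a clean combinatorial identity about the greedy arc-matching and then prove that identity using the last-in--first-out structure of the arcs. Write the lower of the two rows as $b_1 < b_2 < \cdots < b_m$ and the row above it as $t_1 < t_2 < \cdots < t_{m'}$ with $m' \le m$, so that in the tableau $t_p$ sits directly above $b_p$ and $t_p > b_p$. First I would read off exactly what jeu de taquin does: removing $b_i$ leaves a hole at position $i$ in the $b$-row, and the slide rule (compare the entry $t_p$ above the hole with the entry $b_{p+1}$ to its right, and move the smaller into the hole) makes the hole travel rightward, sliding $b_{i+1}, b_{i+2}, \dots$ one step left for as long as $b_{p+1} < t_p$; it sends a $t$ downward at the first position $k \ge i$ with $t_k < b_{k+1}$ (using the convention $b_{m+1} = \infty$). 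Thus the entry that slides down is $t_k$ for $k = \min\{p \ge i : t_p < b_{p+1}\}$, and no other entry of the $t$-row moves down.

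Next I would reinterpret the arcs as a stack matching. Scanning the merged values of the two rows in increasing order, pushing each $b$ and popping the top of the stack for each $t$, pops the largest currently available $b$ smaller than the given $t$ --- which is exactly the arc rule (the largest lower number not already used). Let $\sigma(k)$ denote the $b$-index joined by an arc to $t_k$; since the $b$'s increase, the hypothesis ``$t_k$ forms an arc with some $b_{j_0} \le b_i$'' is the same as $\sigma(k) \le i$. So the lemma is equivalent to the identity
\[\min\{p \ge i : t_p < b_{p+1}\} \;=\; \max\{k : \sigma(k) \le i\},\]
together with the assertion that this common value is $\ge i$, which is what the clause $k \ge i$ records.

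To prove the identity I would exploit ``empty-stack milestones''. Put $k = \min\{p \ge i : t_p < b_{p+1}\}$. The stopping inequality $t_k < b_{k+1}$ forces the stack to be empty immediately after $t_k$ is popped: at that moment exactly $b_1, \dots, b_k$ have been pushed and $t_1, \dots, t_k$ have popped, so $\{t_1,\dots,t_k\}$ is matched precisely onto $\{b_1,\dots,b_k\}$; consequently every $t_{k'}$ with $k' > k$ is joined to a $b$ of index $> k \ge i$, giving $\sigma(k') > i$. On the other hand the inequalities $b_{p+1} < t_p$ for $i \le p < k$ force the stack to be nonempty right after each of $t_i, \dots, t_{k-1}$ (at least one more $b$ than $t$ has been processed). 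Letting $s$ be the last index $< k$ with empty stack after $t_s$ --- necessarily $s < i$, allowing $s=0$ for the empty initial stack --- the pops $t_{s+1}, \dots, t_k$ form a single irreducible balanced block matched to $b_{s+1}, \dots, b_k$, and in an irreducible block the last pop is joined to the first push; hence $\sigma(k) = s+1 \le i$. Combining the two halves, $k$ is exactly the largest index with $\sigma(k) \le i$, and $t_k$ is the largest $t$-entry on an arc to some $b_{j_0} \le b_i$.

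The main obstacle, and the point I would treat most carefully, is the bookkeeping that turns the two chains of inequalities ($t_k < b_{k+1}$ on one side, $b_{p+1} < t_p$ for $i \le p < k$ on the other) into the exact push/pop counts that pin down when the stack is empty --- in particular checking that no $b$ is pushed between $t_{k-1}$ and $t_k$, which follows from $b_k < t_{k-1} < t_k < b_{k+1}$, so that $t_k$ really does pop the block's first element. I would also address the degenerate possibility that the hole runs off the end of the row without sending any $t$ down: this is exactly the case $\max\{k : \sigma(k) \le i\} < i$, excluded by the clause $k \ge i$. A pigeonhole count shows it cannot occur when $m' = m$ (the $m - i + 1$ entries $t_i, \dots, t_m$ cannot all be matched into the $m-i$ positions to the right of $i$), so for rectangular and $(n,k,k)$ shapes some $t$ always slides and $k \ge i$ is automatic.
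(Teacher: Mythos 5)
Your argument is, in substance, the paper's own argument translated into stack language rather than a different route. The paper also reduces the slide to the inequalities $t_j > b_{j+1}$ for $i \le j < k$ together with $t_k < b_{k+1}$, and its two key steps --- ``$t_p < b_{p+1}$ if and only if the arcs biject $\{t_1,\ldots,t_p\}$ with $\{b_1,\ldots,b_p\}$'' and ``$(b_{j_0},t_k)$ is an arc if and only if the arcs biject $\{t_1,\ldots,t_{j_0-1}\}$ with $\{b_1,\ldots,b_{j_0-1}\}$, equivalently $t_{j_0-1}<b_{j_0}$'' --- are exactly your ``stack empty after $t_p$'' and ``last pop of an irreducible block matches its first push'' observations. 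Your verification of the identity $\min\{p\ge i: t_p<b_{p+1}\}=\max\{k:\sigma(k)\le i\}$ in the case where the minimum exists is correct and complete.

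The step that fails is the sentence asserting that the degenerate case (the hole exits the row with no $t$ descending) ``is exactly the case $\max\{k:\sigma(k)\le i\}<i$.'' That equivalence is false, and in fact the lemma's ``if'' direction fails for general shapes. Take bottom row $1,2,3,6$ and top row $4,5$: the arcs are $(3,4)$ and $(2,5)$, so for $i=2$ the entry $t_2=5$ is the largest entry of its row arced to some $b_{j_0}\le b_2$ and $k=2\ge i$; yet removing $b_2=2$ slides $3$ and then $6$ leftward and nothing descends, because $t_1$ has already reached past position $2$ to claim $b_3$, so $t_2<b_3$ fails even though $\sigma(2)\le i$. (Your block analysis only shows that the $t_{k'}$ \emph{emptying} its block satisfies $\sigma(k')=s+1$; when no $t_{k'}$ with $k'\ge i$ empties its block, one can still have $\sigma(k')\le i$.) To be fair, the paper's proof has the same blind spot --- it opens with ``let $t_k$ be the number that slides down,'' presupposing existence, so it really only establishes the ``only if'' direction plus the identification of $k$ when a slide occurs. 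Your pigeonhole remark correctly shows the degenerate case cannot arise when the two rows have equal length, which covers the rectangular tableaux for which Lemma \ref{lemma:jeu de taquin slides} is actually invoked in Proposition \ref{proposition: promotion and rotation}; with that hypothesis added (or with the statement read as identifying \emph{which} $t_k$ slides, conditional on some slide occurring), your proof is complete.
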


\begin{proof}
This proof involves only the subtableau consisting of the row with $b_i$ and the row above $b_i$.  For convenience, we refer to the row with $t_1, t_2, \ldots$ as the {\em top row} and the row with $b_1, b_2, \ldots$ as the {\em bottom row}, though there may be other rows in the entire Young tableau.  We sketch a schematic below.  Not all boxes are shown in the sketch; the row with $t_k$ has length {\em at least} $k$, and the row originally containing $b_i$ has length {\em at least} $k+1$.
\[\begin{array}{|c|c|c|c|c|c|c|c}
\cline{1-6} t_1 &  \cdots     & t_i & t_{i+1} & \cdots & t_k & \multicolumn{1}{c}{\cdots} &\\
\cline{1-7} b_1 &  \cdots & b_i & b_{i+1} & \cdots & b_k & b_{k+1} & \cdots \\
\cline{1-7}
\end{array}
\]

The rules of jeu de taquin imply that at most one box in each row can slide down.  Let $t_k$ be the number that slides down from the top row.  By definition $t_k$ slides down if and only if $t_j > b_{j+1}$ for $j=i,i+1,\ldots,k-1$ while $t_k < b_{k+1}$.  

We show that the inequality $t_k < b_{k+1}$ holds if and only if the arcs give a bijection between $\{t_1, t_2, \ldots, t_k\}$ and $\{b_1, b_2, \ldots, b_k\}$.  If $t_k > b_{k+1}$ then at least one of $t_1, t_2, \ldots, t_k$ forms an arc with $b_{k+1}$.  Conversely let $t_k < b_{k+1}$.  The definition of a standard tableau implies that each of $t_1, t_2, \cdots,  t_{k-1}$ is less than $b_{k+1}$, and less than every number on the bottom row and to the right of $b_{k+1}$.  By construction of $\EM$-diagrams, each of $t_1, t_2, \ldots, t_k$ is joined by an arc to a number in the set $\{b_1, b_2, \ldots, b_k\}$.  At most one of the numbers $b_1, b_2, \ldots, b_k$ is on each arc, so the arcs give a bijection as claimed.

Consider the arc $(b_{j_0}, t_{k})$.  The numbers on the bottom row under $(b_{j_0}, t_{k})$ are in bijection with the numbers on the top row under the arc $(b_{j_0}, t_{k})$ because arcs are noncrossing.  We know that $b_k < t_k < b_{k+1}$ so $\{b_{j_0}, b_{j_0+1}, \ldots, b_k\}$ are the numbers on the bottom row under $(b_{j_0}, t_{k})$.   Comparing cardinalities, we see that $\{t_{j_0}, t_{j_0+1}, \ldots, t_k\}$ are the numbers on the top row under $(b_{j_0}, t_k)$.

We conclude that $(b_{j_0},t_k)$ is an arc if and only if the arcs are a bijection from $\{b_{j_0}, b_{j_0+1}, \ldots, b_k\}$ to $\{t_{j_0}, t_{j_0+1}, \ldots, t_k\}$ that matches $b_{j_0}$ with $t_k$.  Since $t_k < b_{k+1}$ the arcs match $\{b_1, b_2, \ldots, b_k\}$ with $\{t_1, t_2, \ldots, t_k\}$, so in fact $(b_{j_0}, t_k)$ is an arc if and only if the arcs form a bijection from $\{t_1, t_2, \ldots, t_{j_0-1}\}$ to $\{b_1, b_2, \ldots, b_{j_0-1}\}$.   The arcs give a bijection between $\{b_1, b_2, \ldots, b_{j_0-1}\}$ and $\{t_1, t_2, \ldots, t_{j_0-1}\}$ if and only if $t_{j_0-1} < b_{j_0}$ by the earlier argument with $j_0-1$ replacing $k$.  In particular $j_0 \leq i$ and no number on the same row as $t_k$ and larger than $t_k$ forms an arc with any number on the bottom row that is less than $b_i$.
\end{proof}

We give a direct, short proof that promotion corresponds to rotation of webs, using resolved $\EM$-diagrams.

\begin{proposition}\label{proposition: promotion and rotation}
Jeu-de-taquin promotion on $3 \times n$ standard Young tableaux corresponds to rotation of webs for $\mathfrak{sl}_3$.
\end{proposition}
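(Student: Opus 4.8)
The plan is to push everything through the depth map and the bijection of Theorem~\ref{theorem: bijection}. Since taking a resolved $\EM$-diagram and the depth map are mutually inverse bijections, it is enough to fix a $3 \times n$ standard tableau $T$, rotate the web obtained from $T$ by one step, and verify that the depth map sends the rotated web to the promotion of $T$. I would make rotation explicit by bending the boundary line into a circle, so that the web of $T$ sits in a disk with the unbounded face on the outside; rotation then carries the boundary vertex $1$, together with its single incident edge, around to the far-right end of the line, so that the boundary is reindexed by $i \mapsto i-1$ for $i \geq 2$ and $1 \mapsto 3n$. The entire statement then reduces to computing, for each boundary vertex of the rotated web, the depth change $d(i+\epsilon) - d(i-\epsilon)$ and reading off a row by the rule of Corollary~\ref{corollary:depth map on ms}.

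The geometric input is that vertex $1$ always lies on the bottom row, so it is the left endpoint of a first arc, and its incident strand is exactly the lowest-left strand of the diagram. First I would describe promotion combinatorially using Lemma~\ref{lemma:jeu de taquin slides}: erasing $1$ forces its first-arc partner to slide down, and applying the lemma one pair of adjacent rows at a time produces the successive entries that slide down, one per row, until a top-row cell is emptied and then refilled with $3n$. Next I would match this list of sliding entries with the boundary vertices whose arcs are disturbed when vertex $1$'s strand is swept around: the ``largest number on the row below'' selection rule of Lemma~\ref{lemma:jeu de taquin slides} is precisely the order in which the swept strand meets the arcs it must cross, so the slide path on the tableau side and the sweep path on the web side coincide.

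With the path identified, I would finish by comparing rows vertex by vertex. For a boundary vertex off the path, the sweep changes neither of the two arcs bordering its adjacent faces, so by Corollary~\ref{corollary:depth map on ms} it keeps its row (with label shifted down by one); these are exactly the entries that promotion merely decrements. For the vertices on the path the rows cycle: each entry that slides down drops one row, while the erased vertex $1$ travels from the bottom row around to the top row and returns as the new maximum $3n$, so its depth change flips from $+1$ to $-1$. Assembling these local statements shows that the tableau read from the rotated web is the promotion of $T$, and since both maps are bijections this proves the correspondence.

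The hard part will be the middle step: making rigorous the claim that sweeping vertex $1$'s edge around the boundary is the geometric shadow of the jeu-de-taquin slides. Promotion genuinely \emph{recombines} the $\EM$s rather than relabelling them---the $\EM$-triples of the promoted tableau need not be the shifted triples of $T$---and it is exactly the crossings in the $\EM$-diagram, traversed straight through by the swept strand, that cause this recombination. So the crux is to prove term by term that ``continue straight at each crossing'' on the web side matches the recursive ``largest available number on the row below'' rule of Lemma~\ref{lemma:jeu de taquin slides}, and to check that the wrap-around of vertex $1$ flips its depth change correctly so that $3n$ is placed on the top row. Once this identification is in hand, Corollary~\ref{corollary:depth map on ms} and the bijection of Theorem~\ref{theorem: bijection} turn it into the desired statement with no further computation.
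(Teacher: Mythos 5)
Your outline reproduces the right global structure---rotation drags the strand at the boundary vertex $1$ around the disk, and Lemma~\ref{lemma:jeu de taquin slides} identifies the two entries ($a$, the middle-row partner of $1$, and then the top-row entry $x_k$) that slide down under promotion---but the step you yourself flag as ``the hard part'' is not a detail to be checked later: it \emph{is} the proof, and as written your plan for it has a circularity. You propose to read the promoted tableau off the rotated web by applying Corollary~\ref{corollary:depth map on ms} vertex by vertex, but that corollary computes depth changes for a \emph{resolved $\EM$-diagram}, in terms of which arc of which $\EM$ a boundary vertex lies on. After you sweep the strand of vertex $1$ around the boundary, the resulting picture is no longer presented as a resolved $\EM$-diagram of any tableau: the dragged edge now crosses second arcs as if it were itself a second arc, and the crossings it traverses are no longer of the allowed (first arc)--(second arc) type. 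Establishing that the rotated web \emph{is} the resolved $\EM$-diagram of some tableau---and determining which one---is exactly the content you have deferred, so the ``off the path'' vertices keeping their rows and the ``on the path'' vertices cycling cannot yet be deduced from Corollary~\ref{corollary:depth map on ms}.

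The paper closes this gap not by recomputing depths on the rotated web but by two explicit local re-associations of crossings (Figures~\ref{figure:rotating the first arc} and~\ref{figure:rotating the second arc}): the dragged strand together with the second arcs $(x_1,y_1),\dots,(x_k,y_k)$ it crosses is replaced by the rewired arcs $(b,y_1),(x_1,y_2),\dots,(x_{k-1},y_k)$ with a new $\EM$ at $x_k$, and then the first arcs crossing the new middle strand are rewired similarly; each replacement is checked to have the same resolution, and the output is verified to be a legitimate $\EM$-diagram (all rewired arcs remain first or second arcs, so the crossing constraints of Proposition~\ref{proposition:basic noncrossing props} still hold). Only then does Lemma~\ref{lemma:jeu de taquin slides} enter, to confirm that the tableau whose $\EM$-diagram this is---with $a$ moved to the bottom row, $x_k$ to the middle row, and $c=1$ reborn as $3n$ on the top row---is the promotion. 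If you want to salvage your depth-map version, you would still need these rewiring moves (or an equivalent invariance argument for path depth under dragging a strand across crossings), at which point you have reproduced the paper's argument; so as it stands the proposal is a correct plan with its essential step missing.
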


\begin{proof}
We begin by rotating an $\EM$-diagram.  Suppose that $(c,a,b)$ is an $\EM$ with first arc $(c,a)$.  Any arcs $(x_1,y_1), (x_2,y_2), \ldots, (x_k,y_k)$ that cross $(c,a)$ are second arcs.  If we rotate $c$ from the far left to the far right position on the number line, then a second arc appears to cross second arcs.  However, the two pieces of $\EM$-diagrams in Figure \ref{figure:rotating the first arc} both have the same resolution.  So the original $\EM$-diagram gives the same web as the figure with $(a,x_k,c)$ as an $\EM$, and with arcs $(b,y_1), (x_1,y_2), \ldots, (x_{k-1},y_k)$ instead of $(x_1,y_1), (x_2,y_2), \ldots, (x_k,y_k)$.  (The arcs involving $b,x_1,x_2,\ldots,x_{k-1}, y_1,y_2,\ldots,y_k$ are second arcs in both $\EM$-diagrams, so only cross first arcs.)

\begin{figure}[h]
\begin{picture}(26,50)(-13,-10)
\put(0,0){\line(0,1){40}}
\put(10,0){\line(-1,1){10}}
\put(10,15){\line(-1,0){20}}
\put(10,25){\line(-1,0){20}}
\put(5,28){$\vdots$}
\put(-5,28){$\vdots$}
\put(10,35){\line(-1,0){20}}
\put(0,10){\circle*{1}}

\put(0,39){\vector(0,-1){2}}
\put(0,31){\vector(0,-1){2}}
\put(0,21){\vector(0,-1){2}}
\put(0,14){\vector(0,-1){2}}

\put(6,15){\vector(-1,0){2}}
\put(6,25){\vector(-1,0){2}}
\put(6,35){\vector(-1,0){2}}

\put(-4,15){\vector(-1,0){2}}
\put(-4,25){\vector(-1,0){2}}
\put(-4,35){\vector(-1,0){2}}

\put(0,4){\vector(0,1){2}}
\put(6,4){\vector(-1,1){2}}

\put(-1,-3){\small $a$}
\put(10,-3){\small $b$}
\put(11,14){\small $x_1$}
\put(11,24){\small $x_2$}
\put(11,34){\small $x_k$}
\put(-14,14){\small $y_1$}
\put(-14,24){\small $y_2$}
\put(-14,34){\small $y_k$}
\put(-1,41){\small $c$}

\put(-15,-10){Piece of $\EM$-diagram}
\end{picture}
$\hspace{0.75in}$
\begin{picture}(26,50)(-13,-10)
\put(0,0){\line(0,1){40}}
\put(10,0){\line(-1,1){10}}

\put(10,19){\line(-1,0){10}}
\put(10,27){\line(-1,0){10}}
\put(5,30){$\vdots$}
\put(10,35){\line(-1,0){10}}

\put(0,15){\line(-1,0){10}}
\put(0,23){\line(-1,0){10}}
\put(-5,26){$\vdots$}
\put(0,31){\line(-1,0){10}}

\put(0,32){\vector(0,1){2}}
\put(0,24){\vector(0,1){2}}
\put(0,16){\vector(0,1){2}}

\put(0,39){\vector(0,-1){2}}
\put(0,30){\vector(0,-1){2}}
\put(0,22){\vector(0,-1){2}}
\put(0,14){\vector(0,-1){2}}

\put(6,19){\vector(-1,0){2}}
\put(6,27){\vector(-1,0){2}}
\put(6,35){\vector(-1,0){2}}

\put(-4,15){\vector(-1,0){2}}
\put(-4,23){\vector(-1,0){2}}
\put(-4,31){\vector(-1,0){2}}

\put(0,4){\vector(0,1){2}}
\put(6,4){\vector(-1,1){2}}

\put(-1,-3){\small $a$}
\put(10,-3){\small $b$}
\put(11,18){\small $x_1$}
\put(11,26){\small $x_2$}
\put(11,34){\small $x_k$}
\put(-14,14){\small $y_1$}
\put(-14,22){\small $y_2$}
\put(-14,30){\small $y_k$}
\put(-1,41){\small $c$}

\put(-10,-10){Resolution}
\end{picture}
$\hspace{0.75in}$
\begin{picture}(26,50)(-13,-10)
\put(0,0){\line(0,1){40}}
\put(10,35){\line(-1,0){10}}
\put(0,35){\circle*{1}}

\put(10,0){\line(-1,1){11}}
\put(-1,11){\line(0,1){4}}

\put(10,19){\line(-1,0){11}}
\put(10,27){\line(-1,0){11}}

\put(-1,19){\line(0,1){4}}
\put(-1,11){\line(0,1){4}}
\put(-1,27){\line(0,1){4}}

\put(-1,15){\line(-1,0){10}}
\put(-1,23){\line(-1,0){10}}
\put(-1,31){\line(-1,0){10}}

\put(-5,26){$\vdots$}
\put(5,22){$\vdots$}

\put(6,4){\vector(-1,1){2}}

\put(0,39){\vector(0,-1){2}}

\put(0,4){\vector(0,1){2}}
\put(0,14){\vector(0,1){2}}
\put(0,22){\vector(0,1){2}}
\put(0,30){\vector(0,1){2}}

\put(6,19){\vector(-1,0){2}}
\put(6,27){\vector(-1,0){2}}
\put(6,35){\vector(-1,0){2}}

\put(-4,15){\vector(-1,0){2}}
\put(-4,23){\vector(-1,0){2}}
\put(-4,31){\vector(-1,0){2}}

\put(-1,-3){\small $a$}
\put(10,-3){\small $b$}
\put(11,18){\small $x_1$}
\put(11,26){\small $x_{k-1}$}
\put(11,34){\small $x_k$}
\put(-15,14){\small $y_1$}
\put(-15,22){\small $y_2$}
\put(-15,30){\small $y_k$}
\put(-1,41){\small $c$}
\put(-15,-10){Piece of $\EM$-diagram}
\end{picture}
\caption{Two pieces of $\EM$-diagrams with the same resolution} \label{figure:rotating the first arc}
\end{figure}

The arc from $x_k$ used to be the beginning of a second arc, and is now the resolved boundary vertex on two arcs.  A resolved $\EM$-diagram should have no arcs crossing the arc from its resolved boundary vertex, though there may currently be some first arcs crossing $x_k$.  The two pieces of an $\EM$-diagram shown in Figure \ref{figure:rotating the second arc} have the same resolution.  
\begin{figure}[h]
\begin{picture}(26,50)(-13,-10)
\put(0,0){\line(0,1){40}}
\put(10,10){\line(-1,0){20}}
\put(10,20){\line(-1,0){20}}
\put(10,30){\line(-1,0){20}}
\put(5,24){$\vdots$}
\put(-5,24){$\vdots$}
\put(10,40){\line(-1,0){20}}
\put(0,40){\circle*{1}}

\put(4,10){\vector(1,0){2}}
\put(4,20){\vector(1,0){2}}
\put(4,30){\vector(1,0){2}}
\put(6,40){\vector(-1,0){2}}

\put(-6,10){\vector(1,0){2}}
\put(-6,20){\vector(1,0){2}}
\put(-6,30){\vector(1,0){2}}
\put(-6,40){\vector(1,0){2}}

\put(0,4){\vector(0,1){2}}
\put(0,14){\vector(0,1){2}}
\put(0,24){\vector(0,1){2}}
\put(0,34){\vector(0,1){2}}

\put(-1,-3){\small $x_k$}
\put(11,9){\small $v_1$}
\put(11,19){\small $v_2$}
\put(11,29){\small $v_j$}
\put(-15,9){\small $u_1$}
\put(-15,19){\small $u_2$}
\put(-15,29){\small $u_j$}
\put(11,39){\small $c$}
\put(-15,39){\small $a$}
\put(-15,-10){Piece of $\EM$-diagram}
\end{picture}
$\hspace{0.75in}$
\begin{picture}(26,50)(-13,-10)
\put(0,0){\line(0,1){40}}
\put(10,12){\line(-1,0){10}}
\put(10,22){\line(-1,0){10}}
\put(10,32){\line(-1,0){10}}
\put(0,8){\line(-1,0){10}}
\put(0,18){\line(-1,0){10}}
\put(0,28){\line(-1,0){10}}
\put(5,26){$\vdots$}
\put(-5,22){$\vdots$}
\put(10,40){\line(-1,0){20}}
\put(0,40){\circle*{1}}

\put(4,12){\vector(1,0){2}}
\put(4,22){\vector(1,0){2}}
\put(4,32){\vector(1,0){2}}
\put(6,40){\vector(-1,0){2}}

\put(-6,8){\vector(1,0){2}}
\put(-6,18){\vector(1,0){2}}
\put(-6,28){\vector(1,0){2}}
\put(-6,40){\vector(1,0){2}}

\put(0,11){\vector(0,-1){2}}
\put(0,21){\vector(0,-1){2}}
\put(0,31){\vector(0,-1){2}}

\put(0,4){\vector(0,1){2}}
\put(0,14){\vector(0,1){2}}
\put(0,24){\vector(0,1){2}}
\put(0,35){\vector(0,1){2}}

\put(-1,-3){\small $x_k$}
\put(11,11){\small $v_1$}
\put(11,21){\small $v_2$}
\put(11,31){\small $v_j$}
\put(-15,7){\small $u_1$}
\put(-15,17){\small $u_2$}
\put(-15,27){\small $u_j$}
\put(11,39){\small $c$}
\put(-15,39){\small $a$}
\put(-11,-10){Resolution}
\end{picture}
$\hspace{0.75in}$
\begin{picture}(26,50)(-13,-10)
\put(0,0){\line(0,1){40}}
\put(10,12){\line(-1,0){11}}
\put(10,22){\line(-1,0){11}}
\put(10,32){\line(-1,0){11}}
\put(0,8){\line(-1,0){10}}
\put(-1,18){\line(-1,0){10}}
\put(-1,28){\line(-1,0){10}}
\put(-1,38){\line(-1,0){10}}
\put(5,16){$\vdots$}
\put(-5,22){$\vdots$}
\put(10,40){\line(-1,0){10}}
\put(0,8){\circle*{1}}

\put(-1,18){\line(0,-1){6}}
\put(-1,28){\line(0,-1){6}}
\put(-1,38){\line(0,-1){6}}

\put(4,12){\vector(1,0){2}}
\put(4,22){\vector(1,0){2}}
\put(4,32){\vector(1,0){2}}
\put(6,40){\vector(-1,0){2}}

\put(-6,8){\vector(1,0){2}}
\put(-6,18){\vector(1,0){2}}
\put(-6,28){\vector(1,0){2}}
\put(-6,38){\vector(1,0){2}}

\put(0,11){\vector(0,-1){2}}
\put(0,18){\vector(0,-1){2}}
\put(0,28){\vector(0,-1){2}}

\put(0,3){\vector(0,1){2}}
\put(0,37){\vector(0,-1){2}}

\put(-1,-3){\small $x_k$}
\put(11,11){\small $v_1$}
\put(11,21){\small $v_{j-1}$}
\put(11,31){\small $v_j$}
\put(-15,7){\small $u_1$}
\put(-15,17){\small $u_2$}
\put(-15,27){\small $u_j$}
\put(11,39){\small $c$}
\put(-15,37){\small $a$}
\put(-15,-10){Piece of $\EM$-diagram}
\end{picture}
\caption{Two pieces of $\EM$-diagrams with the same resolution} \label{figure:rotating the second arc}
\end{figure}
So the original $\EM$-diagram gives the same web as the figure with $(u_1,x_k,c)$ as an $\EM$, and with arcs $(a,v_j), (u_j,v_{j-1}), \ldots, (u_2,v_1)$ instead of $(u_1,v_1), (u_2,v_2), \ldots, (u_j,v_j)$.  The arcs involving $a,u_1,u_2,\ldots,u_j,v_1,v_2,\ldots,v_j$ are first arcs in both $\EM$-diagrams, so only cross second arcs.  

Hence rotating the original $\EM$-diagram so that $c$ goes from first to last position on the boundary gives the resolved $\EM$-diagram with $(u_1,x_k,c)$ as an $\EM$, and with arcs $(a,v_j), (u_j,v_{j-1}), \ldots, (u_2,v_1)$ and $(b,y_1), (x_1,y_2), \ldots, (x_{k-1},y_k)$ that otherwise agrees with the original $\EM$-diagram.

We now confirm that the tableau corresponding to this rotated $\EM$-diagram is the promotion of the original Young tableau.  It suffices to determine which numbers are on which row of the Young tableau after promotion.  After promoting the original Young tableau, the number $c=1$ is removed.  The largest number on the middle row with an arc to a number at most $1$ on the bottom row is $a$, by definition.  Lemma \ref{lemma:jeu de taquin slides} says that jeu de taquin slides $a$ down to the bottom row.  Figure \ref{figure:rotating the first arc} shows that $x_k$ is defined to be the largest number on the top row that is joined by an arc to a number $y_k$ on the middle row with $y_k \leq a$.  In other words $x_k$ is the number that slides from the top to middle row after jeu de taquin, also by Lemma \ref{lemma:jeu de taquin slides}.  The original Young tableau has three rows so these two vertical slides determine each row of the promoted Young tableau, which therefore corresponds to the $\EM$-diagram in Figure \ref{figure:rotating the second arc}.
\end{proof}

Rotation does not correspond to promotion of {\em arbitrary} three-row tableaux.  For instance, promotion gives
\[\begin{array}{|c|c|}
\cline{1-1}
3 & \multicolumn{1}{c}{} \\
\cline{1-2}
2 & 5\\
\cline{1-2}
1 & 4\\
\cline{1-2}
\end{array}  \rightarrow
\begin{array}{|c|c|}
\cline{1-1}
5 & \multicolumn{1}{c}{} \\
\cline{1-2}
2 & 4\\
\cline{1-2}
1 & 3\\
\cline{1-2}
\end{array}
\]
while rotating the web with arcs $(1,2,3)$ and $(4,5)$ gives the web with arcs $(1,2,5)$ and $(3,4)$.  The reader may notice an provocative connection between the rotated web and the promoted tableau in this case.  Perhaps our construction could be modified to extend to the general three-row case.

\subsection{Insertion and joins}
The join of two webs is obtained by cutting the boundary line of one web and slipping the second web into the gap.  We define the join of two $\EM$-diagrams analogously.  The join operation commutes with resolving $\EM$-diagrams, in the sense that the join of two resolved $\EM$-diagrams is the resolution of the join of the $\EM$-diagrams.  For instance, the join after $1$ of the $\EM$-diagram with arc $(1,2)$ into the $\EM$-diagram with $\EM$ $(1,2,3)$ inserts $(1,2)$ after the first boundary vertex of $(1,2,3)$.  This produces the following $\EM$-diagram and (undirected) web, regardless of whether the join is taken before or after resolving $\EM$-diagrams:
\[\begin{picture}(60,15)(0,-5)
\put(0,0){\line(1,0){60}}
\multiput(10,-1)(10,0){5}{\line(0,1){2}}
\put(25,0){\oval(10,10)[t]}
\put(25,0){\oval(30,15)[t]}
\put(45,0){\oval(10,10)[t]}
\end{picture}
\hspace{0.5in}
\begin{picture}(60,15)(0,-5)
\put(0,0){\line(1,0){60}}
\multiput(10,-1)(10,0){5}{\line(0,1){2}}
\put(25,0){\oval(10,10)[t]}
\put(30,0){\oval(40,16)[t]}
\put(40,0){\line(0,1){8}}
\end{picture}
\]
We show that join is equivalent to a natural operation on standard Young tableaux, which we call a {\em shuffle} of tableaux.

\begin{definition}
Let $T$ and $T'$ be arbitrary Young tableaux with $N$ and $N'$ boxes, respectively.  Let $i \leq N$.  The shuffle of $T'$ into $T$ at $i$ is a tableau denoted $T' \stackrel{i}{\mapsto} T$ and defined by:
\begin{itemize}
\item For $j = 1,2,\ldots, i$, put $j$ in the same row of $T' \stackrel{i}{\mapsto} T$ as in $T$.
\item For $j = 1,2,\ldots, N'$, put $i+j$ in the same row of $T' \stackrel{i}{\mapsto} T$ as $j$ is in $T'$.
\item For $j = i+1,i+2,\ldots, N$, put $N'+j$ in the same row of $T' \stackrel{i}{\mapsto} T$ as $j$ is in $T$.
\end{itemize}
\end{definition}

If $T$ and $T'$ are standard, then the shuffle is a standard tableau by construction.  We give an example; inserted numbers are in boldface and the boundary of $T'$ is highlighted.  A shuffle splits $T$ into two pieces which fit together perfectly, similar to splitting a deck of cards.
\begin{figure}[h]
$T = \begin{array}{|c|c|}
\cline{1-2}
5 & 6\\
\cline{1-2}
3 & 4\\
\cline{1-2}
1 & 2\\
\cline{1-2}
\end{array} \hspace{0.5in} $
$T' = \begin{array}{|c|c|}
\cline{1-2}
4 & 6\\
\cline{1-2}
2 & 5\\
\cline{1-2}
1 & 3\\
\cline{1-2}
\end{array} \hspace{0.5in} $
$T' \stackrel{3}{\mapsto} T = 
 \mbox{
\begin{picture}(20,15)(0,6)
\multiput(0,0)(5,0){5}{\line(0,1){15}} 
\multiput(0,0)(0,5){4}{\line(1,0){20}} 
\put(1.5,1.5){1}
\put(6.5,1.5){2}
\put(11.5,1.5){\bf 4}
\put(16.5,1.5){\bf 6}
\put(1.5,6.5){3}
\put(6.5,6.5){\bf 5}
\put(11.5,6.5){\bf 8}
\put(15.5,6.5){10}
\put(1.5,11.5){\bf 7}
\put(6.5,11.5){\bf 9}
\put(10.5,11.5){11}
\put(15.5,11.5){12}

\linethickness{2pt}
\put(10,0){\line(1,0){10}}
\multiput(20,0)(-5,5){3}{\line(0,1){5}}
\multiput(10,0)(-5,5){3}{\line(0,1){5}}
\multiput(10,0)(-5,5){3}{\line(1,0){5}}
\multiput(15,5)(-5,5){3}{\line(1,0){5}}
\put(0,15){\line(1,0){10}}
\end{picture}
}
$
\caption{The shuffle of one tableau into another}
\end{figure}

We now prove that shuffle of tableaux corresponds to join of $\EM$-diagrams.  We give two versions of the claim: the first restricts the shape of the Young tableaux but uses arbitrary shuffles; the second restricts the kind of shuffles but uses arbitrary shapes.  For three-row Young tableaux, the $\EM$-diagrams resolve to webs for $\mathfrak{sl}_3$.

\begin{proposition} \label{proposition: shuffle and join}
Let $T'$ be a Young tableau with $N'$ boxes and $T$ be a Young tableau with $N$ boxes.  
\begin{enumerate}
\item The shuffle $T' \stackrel{N}{\mapsto} T$ corresponds to the join after $N$ of the resolved $\EM$-diagram for $T'$ of the resolved $\EM$-diagram for $T$. 
\item Suppose that $T'$ has at least as many rows as $T$, and that each row of $T'$ has the same length (i.e. $T'$ is a rectangle).  The resolved $\EM$-diagram of the shuffle $T' \stackrel{i}{\mapsto} T$ is the join after $i$ of the resolved $\EM$-diagram for $T'$ into the resolved $\EM$-diagram for $T$.
\end{enumerate}
\end{proposition}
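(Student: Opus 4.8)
The plan is to verify both claims at the level of (unresolved) $\EM$-diagrams, since the text already records that the join commutes with resolving; it therefore suffices to show that the greedy arc rule applied to the shuffle $T' \stackrel{i}{\mapsto} T$ produces exactly the arcs of the diagrams for $T$ and for $T'$, laid side by side in the gap opened after position $i$. The organizing observation is that the arc rule processes boundary vertices in increasing order of their labels and always attaches a vertex to the \emph{largest} available vertex on the row below. In the shuffle the labels fall into three value-blocks: the first $i$ labels come from $T$, the next $N'$ from $T'$, and the last $N-i$ again from $T$; and every first-block label is smaller than every $T'$-label, which in turn is smaller than every third-block label.

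First I would dispose of part (1), where $i=N$ and there is no third block. A $T$-label (value $\le N$) can only see other $T$-labels as candidates, so the $T$-labels reproduce the arcs of $T$ verbatim. A $T'$-label, being larger than every $T$-label, prefers any available $T'$-vertex on the row below; well-definedness of the $\EM$-diagram for $T'$ (which guarantees each non-bottom vertex finds a partner one row down) supplies such a vertex, so a short induction on the processing order shows the $T'$-labels reproduce the arcs of $T'$ shifted by $N$ and never reach into the $T$-block. This yields precisely the concatenation of the two diagrams, i.e.\ the join after $N$. In part (2) the identical reasoning handles the first block (whose labels see only first-block candidates and reproduce the corresponding arcs of $T$) and the middle block (whose labels exceed all first-block labels, always find an available $T'$-vertex by well-definedness, and hence reproduce the arcs of $T'$ shifted by $i$, never claiming a first-block vertex).

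The main obstacle, and the place where the rectangle hypothesis is essential, is the third block: a third-block label $N'+j$ must reproduce the arc that $j$ carries in $T$, and when that arc crosses the cut it must now arch \emph{over} the entire inserted $T'$-block rather than land on a $T'$-vertex. I would argue that if $N'+j$ sits on row $r$, then $T$, and hence $T'$, has at least $r$ rows; in a rectangular tableau consecutive rows have equal length, so the arcs between row $r-1$ and row $r$ form a bijection (each upper vertex claims a distinct lower vertex, exhausting the lower row). Since every $T'$-label is processed before any third-block label, each $T'$-vertex on row $r-1$ is by then already joined upward to a $T'$-vertex on row $r$ and is therefore unavailable to $N'+j$. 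Thus the greedy rule skips the whole $T'$-block, and the only candidates visible to $N'+j$ are third-block vertices (all larger than the first-block ones) followed by first-block vertices, in exactly the relative order seen by $j$ in $T$.

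It then remains to check, by induction on the processing order inside the third block, that the availability of each row-$(r-1)$ vertex agrees in the shuffle and in $T$: a vertex is occupied precisely when some already-processed row-$r$ vertex has claimed it, and the partial arc structures agree because first-block arcs reproduce $T$, earlier third-block arcs reproduce $T$ by the inductive hypothesis, and no $T'$-label ever claims a first- or third-block vertex (those labels are never its candidates). Hence $N'+j$ selects the image of the vertex chosen by $j$, so $T$'s arcs are reconstructed, with crossing arcs automatically appearing as semicircles that enclose the $T'$-block. I would close with the geometric consistency remark that no arc of $T$ crosses an arc of $T'$ in the shuffle—first- and third-block arcs lie wholly to one side of the inserted block while crossing arcs enclose it—so opening a gap after position $i$ and dropping in the diagram for $T'$ returns the same diagram, which is the claimed join; the commutation of join with resolution then transfers the statement to resolved diagrams.
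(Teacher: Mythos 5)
Your proposal is correct and follows essentially the same route as the paper's proof: both analyze the greedy arc rule block by block using the value ordering of the three label blocks, show the middle-block candidate set is the shifted candidate set from $T'$ (plus smaller, hence irrelevant, first-block labels), and use the rectangle hypothesis to show the arcs of $T'$ exhaust each row so that third-block labels skip the inserted block entirely. The only cosmetic difference is that you make the exhaustion argument via an explicit bijection between consecutive rows and add a closing remark on crossings, where the paper phrases the same facts as an equality of candidate sets and their maxima.
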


\begin{proof}
If $j=1, \ldots, i$ then $j$ is joined to $k$ by an arc in $T' \stackrel{i}{\mapsto} T$ if and only if $j$ is joined to $k$ by an arc in $T$ by construction of $\EM$-diagrams.

We next show that if $j = 1, \ldots, N'$ then $i+j$ is joined to $i+k$ by an arc in $T' \stackrel{i}{\mapsto} T$ if and only if $j$ is joined to $k$ by an arc in $T'$.  The number $i+k$ is on the row below $i+j$ in $T' \stackrel{i}{\mapsto} T$ if and only if $k$ is on the row below $j$ in $T'$.  Moreover $i+k < i+j$ if and only if $k < j$.  Assume that if $j \leq j'$ then $i+j$ is joined to $i+k$ by an arc in $T' \stackrel{i}{\mapsto} T$ if and only if $j$ is joined to $k$ by an arc in $T'$.  (This is true for $j=1$ since in that case $j$ is on the bottom row of $T'$ and $i+j$ is on the bottom row of $T' \stackrel{i}{\mapsto} T$, so neither has an arc to a lower row.)  Let $j=j'+1$ and say $i+j$ is on the $r+1^{th}$ row.  Suppose that the arcs from $1, 2, \ldots, i+j-1$ have been placed in $T' \stackrel{i}{\mapsto} T$ according to the rules of $\EM$-diagrams.  Then the set
\[\{k': k'<i+j, k' \textup{ is on the $r^{th}$ row in } T' \stackrel{i}{\mapsto} T, k' \textup{ is not yet part of an $r^{th}$ arc in } T' \stackrel{i}{\mapsto} T\} \]
contains the set
\[i+\{k: k<j, k \textup{ is on the $r^{th}$ row in } T' , k \textup{ is  not yet part of an $r^{th}$ arc in } T' \} \]
as well as perhaps some numbers that are at most $i$.  Thus the maximum of each set is the same, so $(j,k)$ is an arc in $T'$ if and only if $(i+j,i+k)$ is an arc in $T' \stackrel{i}{\mapsto} T$.  By induction, this holds for all $j=1,2,\ldots,N'$.

When $i=N$ this proves Part (1) of the claim.

Otherwise, assume that $T'$ has at least as many rows as $T$ and that $T'$ is rectangular.  Then each number $i+1, i+2, \ldots, i+N'$ is either on the top row of $T'$ and hence of $T' \stackrel{i}{\mapsto} T$ or is one vertex of an arc to the next higher row (because $T'$ is rectangular).  This means that for each $j=i+N'+1, i+N'+2,\ldots,N+N'$ with $j$ on the $r+1^{th}$ row, we may inductively show that the set
\[\{k: k<j, k \textup{ is on the $r^{th}$ row}, k \textup{ is not yet part of an $r^{th}$ arc in } T' \stackrel{i}{\mapsto} T\}\]
equals the set
\[\{k:  k \textup{ is on the $r^{th}$ row}, k \textup{ is not yet part of an $r^{th}$ arc in } T' \stackrel{i}{\mapsto} T, k<i \textup{ or } k>i+N'\}.\]
The maximum of this set always corresponds to an entry from $T$, so the maximum equals
\[\left\{\begin{array}{rl}
\max\{k: k<j-N', k \textup{ is on the $r^{th}$ row}, k \textup{ is not in an $r^{th}$ arc in } T \} & \textup{   if } k \leq i \textup{ and} \\
N'+ \max\{k: k<j-N', k \textup{ is on the $r^{th}$ row}, k \textup{ is not in an $r^{th}$ arc in } T \} & \textup{   if } k > i+N'.
\end{array} \right.
\]
In other words, the arcs in $T' \stackrel{i}{\mapsto} T$ either involve only vertices from $T'$ in the same relative positions as the arcs in $T'$, or vertices from $T$ in the same relative positions as the arcs in $T$.  So the $\EM$-diagram corresponding to $T' \stackrel{i}{\mapsto} T$ is the join of the resolved $\EM$-diagrams for $T'$ and $T$.
\end{proof}

Part (2) does not hold for arbitrary $T'$.  For instance, the shuffle
\[\begin{array}{|c|} \cline{1-1} 2 \\ \cline{1-1} 1 \\ \hline \end{array}  \stackrel{2}{\mapsto} \begin{array}{|c|} \cline{1-1} 3  \\ \cline{1-1} 2 \\ \cline{1-1} 1 \\ \hline \end{array} = 
\begin{array}{|c|c|}
\cline{1-1}
5 & \multicolumn{1}{c}{} \\
\cline{1-2}
2 & 4\\
\cline{1-2}
1 & 3\\
\cline{1-2}
\end{array} 
\]
corresponds to the $\EM$-diagram with arc $(1,2)$ and $\EM$ $(3,4,5)$.  By contrast, joining the $\EM$-diagrams $(1,2)$ to $(1,2,3)$ at $2$ gives the $\EM$-diagram with $\EM$ $(1,2,5)$ and arc $(3,4)$.

Together with rotation, Part (1) can be used to construct an arbitrary join of resolved $\EM$-diagrams.  However this gives a weaker claim than Part (2) of Proposition \ref{proposition: shuffle and join}: rotation of webs corresponds to jeu-de-taquin promotion only for tableaux of shape $(n,n,n)$, and the shuffle $T' \stackrel{N}{\mapsto} T$  is a rectangular tableau only if both $T'$ and $T$ are rectangular of the same height.

\section{Acknowledgements}
The author thanks Annie Meyers for significant contributions in early stages of this work, to Dave Anderson for useful suggestions, and to Alex Yong for suggesting the word ``shuffle".

\end{document}